\newtheorem*{thmA}{Theorem A}
\newtheorem*{thmB}{Theorem B}
\newtheorem*{thmC}{Theorem C}
\theoremstyle{plain}
\newtheorem{theorem}{Theorem}[section]
\newtheorem{lemma}[theorem]{Lemma}
\newtheorem{remark}{Remark}
\newcommand{\cal}{\mathcal}
\newcommand{\w}{\color{white}}
\newsavebox{\savepar}
\begin{document}

\title[Dynamics of the  Secant Method]{
Global dynamics of the real secant method}

\date{\today}

\author{Antonio  Garijo}
\address{Departament d'Enginyeria Inform\`atica i Matem\`atiques,
Universitat Rovira i Virgili, 43007 Tarragona, Catalonia.}
\email{antonio.garijo@urv.cat}
\author{Xavier Jarque}
\address{Departament de Matem\`atiques i Inform\`atica at Universitat de Barelona and Barcelona Graduate School of Mathematics, 08007 Barcelona, Catalonia.}
\email{xavier.jarque@ub.edu}

\thanks{This work has been partially supported by MINECO-AEI grants
MTM-2017-86795-C3-2-P and MTM-2017-86795-C3-3-P, and the AGAUR grant 2017 SGR 1374}\

\begin{abstract}
We investigate the root finding algorithm given by the secant method applied to a real polynomial $p$ as a discrete dynamical system defined on $\mathbb R^2$. We study the shape and distribution of the basins of attraction associated to the roots of $p$, and we also show the existence of other stable dynamics that might affect the efficiency of the algorithm. Finally we extend the secant map to the punctured torus  $\mathbb T^2_{\infty}$ which allow us to better understand the dynamics of the secant method near $\infty$ and facilitate the use of the secant map as a method to find all roots of a polynomial. 
\vspace{0.5cm}

\textit{Keywords: Root finding algorithms, dynamical systems, secant method}

\end{abstract}

\maketitle

\section{Introduction}\label{sec:Introduction}

Root finding algorithms
\begin{equation}\label{eq:rootfinding}
x_{n+1}=R\left(x_{n-\ell},\ldots, x_n\right), \ \ell \geq 0, n \geq \ell, \ x\in X
\end{equation} 
are iterative systems so that for most initial {\it seeds} $\left(x_0,\ldots, x_{\ell}\right)$ the sequence $\{x_n\}_{n\geq 0}$ converges to a  solution of a given nonlinear equation, namely $F(x)=0,\ x\in X$. Since many real problems can be modelled in terms of nonlinear equations which do not admit explicit solutions, the applicability of those algorithms is wide over all areas like engineering, economics, sociology or biology. Through the whole paper we will assume that $F$ is a polynomial $p$ of degree at least 2 and hence our goal is to solve the equation $p(x)=0, \ x \in X$ with $X=\{\mathbb R,\mathbb C\}$.

There are many natural questions about the efficiency of those algorithms. If the equation has more than one solution, as it happens in most cases, how to find different seeds converging to each of them? Is it possible to have regions with positive measure where seeds do not  converge to any solution of $p(x)=0$? What can be said about the speed (number of steps) to have a reasonable approximation of the solution (order of convergence)? To answer these questions, or more ambitious, to have a deeper understanding of those algorithms, we treat them as discrete dynamical systems (see, for instance, \cite{HowToNewton}). 

Roughly speaking a discrete dynamical system over $Y$ is a map $f:Y\to Y$ and the orbits induced by this map starting at  $y_0\in Y$, that is, $\{y_n:=f^n\left(y_0\right)\}_{n \geq 1}$ where 
$$
f^n = \underbrace{f\circ \cdots \circ f}_{n \textrm{ times}}.
$$
A main goal when studying dynamical systems is to describe the asymptotic behaviour of those orbits when $y_0$ runs over all $Y$. In particular the study of {\it fixed points}, i.e. $y_0$ in $Y$ such that $f\left(y_0\right)=y_0$. Those points are classified according to the behaviour of the nearby seeds and play a key role on the global dynamics. In particular, a fixed point $y_0$ in $Y$ is called {\it attracting} if there exists $\varepsilon>0$ such that $f^n(y)\to y_0$, as $n \to \infty$, for all $y$  in 
$D_\varepsilon(y_0):=\{y\in Y \ | \ d\left(y,y_0\right) < \varepsilon \}$. Accordingly, given an attracting fixed point   $y_0\in Y$, its {\it basin of attraction} is denoted by $A\left(y_0\right)$ and given by
$$
A\left(y_0\right)=\{y\in Y \ | \ f^{n}(y)\to y_0, \ {\rm as} \  n\to \infty  \},
$$
which it is open and nonempty  by definition.  The connected component of $A\left(y_0\right)$  which contains $y_0$ is called the {\it immediate basin of attraction} of $y_0$  and it is denoted by $A^{\star}\left(y_0\right)$. We also consider {\it periodic points of (minimal) period  $q$}, or {\it $q$-periodic points}, i.e. $y_0$ in $Y$ such that there exist $q\geq 2$ satisfying
$f^q\left(y_0\right)=y_0$ and $f^\ell\left(y_0\right)\ne y_0$ for all $\ell<q$. Similarly, we can define attracting $q$-periodic points and their attracting basins.  

Therefore a root finding algorithm (\ref{eq:rootfinding}) for the equation $p(x)=0,\ X=\{\mathbb R,\mathbb C\}$ can be seen as a discrete dynamical system generated by the map
$$
f_p:Y\to Y,\quad Y=X\times \overset{\ell)}{\cdots} \times X
$$
for which the orbits $\{y_n:=f_p^n\left(y_0\right)\}_{n\in \mathbb N}$ converge, for {\it most} initial conditions, to fixed points of $f_p$ which are in correspondence to the roots of $p$.  The most well-known and universal root finding algorithm applied to $p(z)=0,\ z\in \mathbb C$ is the so called {\it Newton's method}
\begin{equation*}\label{eq:Newton}
N_p: \mathbb C \mapsto \mathbb C, \qquad N_p(z)=z - \frac{p(z)}{p'(z)}.
\end{equation*}
Easy computations show that if $\zeta\in \mathbb C$ is a simple root of $p$ then $N_p(\zeta)=\zeta$ and $N_p^{\prime}(\zeta)=0$, so $\zeta$ is an attracting fixed point. It turns out that for {\it most} initial conditions $z_0\in \mathbb C$ the sequence $\{z_{n}:=N_p^n\left(z_0\right)\}_{n\geq 1}$ converges to some root of $p$. Certainly the dynamical system is not well defined at the critical points of $p$ since the denominator of $N_p$ vanishes but we go over this problem by extending the phase space from $\mathbb C$ to $\hat{\mathbb C}=\mathbb C \cup \{ \infty\}$, where   $\hat{\mathbb C}$ denotes the Riemann sphere.  One can show by the use of the charts defined on $\hat{\mathbb{C}}$ that the Newton's map is well defined at the whole Riemann sphere, and in particular $\infty$ is always a repelling fixed point of $N_p$.

The literature on Newton's method as a root finding algorithm as well as dynamical system is extremely large and in fact  it was the starting point of {\it holomorphic dynamics} (see \cite{cayley1,cayley2,cayley3}). For instance we refer to M. Shishikura \cite{ConnectivityJulia} who proved a remarkable theorem implying  the simple connectivity of the immediate basins of attraction (see also \cite{przytycki}), and we refer to J. Hubbard, D. Schleicher and S. Sutherland \cite{HowToNewton}, who provided a universal set (only depending on the degree of the polynomial) of initial conditions to find out all roots of a polynomial of a given degree. As a counterpart it is known that for certain polynomials of degree larger than two there are open sets of initial conditions for which $N_p$ do not converge to any root of $p$ (see \cite{FamiliesRational}) and so Newton's method for those polynomials might fail as a root finding algorithm. Finally we refer to  \cite{ConnectivityMero} and \cite{BarFagJarKar} for Newton's method applied to transcendental maps.

Alternative to Newton's method, another well known root finding algorithm is the  secant method, although few references can be found in the literature. The main goal of this paper is twofold. On the one hand we explore the secant method as a dynamical system (iterates of the secant map) and on the other hand we provide arguments for a better implementation of the secant method as a root finding algorithm. The {\it secant method} is given by the iterates of the {\it secant map}   
\begin{equation*}
S:=S_p : \mathbb C^2 \mapsto \mathbb C^2 , \qquad
S :
\left( 
\begin{array}{l}
z \\
w
\end{array}
\right) \mapsto  
 \left( 
\begin{array}{l}
w \\
w - p(w) \frac{w-z}{p(w)-p(z)}
\end{array}
\right).
\end{equation*}
It is worthily notice that the Newton's map $N_p$ defines a dynamical system on $\mathbb C$ (that is, we have $\ell=0$ in (\ref{eq:rootfinding})) while the secant map  $S_p$ defines a dynamical system on $\mathbb C^2$ (that is, we have $\ell=1$ in (\ref{eq:rootfinding})).  Of course this fact makes the general study more difficult.
A first step towards the understanding of the complexity of this dynamical system is to restrict the attention to the {\it real version} of the secant method. 

More precisely, let 
\begin{equation}\label{eq:poly_p}
p(x)= \sum_{j=0}^k a_j x^j, \ a_k=1,\ a_j\in \mathbb R,
\end{equation}
with $k>1$, be a monic polynomial having exactly $n$ real roots denoted by $\alpha_1<\alpha_2<\ldots <\alpha_n$ with $1\leq n \leq k$, all simple. The (real) secant map is given by
\begin{equation} \label{eq:secant_real}
S:=S_p: \mathbb R^2 \mapsto \mathbb R^2 , \qquad
S: \left( 
\begin{array}{l}
x \\
y
\end{array}
\right) \mapsto  
 \left( 
\begin{array}{l}
y \\
y - p(y) \frac{y-x}{p(y)-p(x)}
\end{array}
\right),
\end{equation}
and the orbit of the seed $\left(x_0,y_0\right)\in \mathbb R^2$  is given by the iterates of the map; that is, the sequence $\{(x_n,y_n)=S^n\left(x_0,y_0\right)\in \mathbb R^2\}_{n\in \mathbb N}$. 

We notice that the real version of Newton's method and secant method is based on a similar idea. In Newton's method, for a given seed $x_0\in \mathbb R$, the point $x_1=N_p\left(x_0\right)$ is the intersection between the $x-$axis and the tangent line through the point $\left(x_0,p(x_0)\right)$ while in the secant case for a given seed $(x_0,y_0)\in \mathbb R^2$, the point $(x_1,y_1)=S\left(x_0,y_0\right)$ is given by $x_1=y_0$ and $y_1$ is the intersection between the $x-$axis and the secant line through the points $\left(x_0,p(x_0)\right)$ and $\left(y_0,p(y_0)\right)$. 

Certainly, the secant method has disadvantages with respect to Newton's method, like for instance that locally, near simple roots, Newton's method has quadratic convergence while the secant method has local convergence less than 2. However,
if we are only interested on the real roots of $p$, which is very plausible in multiple applications, the secant method studied in this paper might be a more powerful tool than  Newton's method. If for instance the polynomial has most of the roots complex and we use the algorithm developed in \cite{HowToNewton} we are making a tremendous numerical effort to compute roots which are not in our interest. 

For a review on numerical analysis and root finding algorithms (local order of convergence, computing implementation, etc) we refer to \cite{Traub_Book}. While this paper has been written we learned about \cite{BedFri}  where the authors have studied independently the secant map as a dynamical system on $\mathbb C^2$ (see Theorem C for a further discussion). Finally in \cite{secant_2} several root finding algorithms, including the secant method, are applied to degree two polynomials. 

\subsection*{Rational plane maps}
There are several papers in the literature studying discrete dynamical systems on $\mathbb C^2$ or $\mathbb {CP}^2$, induced by injective maps. For example there are many papers on polynomial automorphisms of $\mathbb C^2$ (see for instance \cite{BerfordSmillie1,BerfordSmillie2,BerfordSmillie3, DujardinHenon, DujardinLyubich}), on the complex version of the (polynomial) H\'enon map (see for instance\cite{HubbardHenon1,HubbardHenon2,HubbardHenon3,BerfordSmillie, FornaesSibonyHenon,BeniniHenon}), or birational maps (see \cite{CimaManosasBirational,BedfordDillerBirational,BedfordBirational,ZafarCima}.  In contrast the secant map defined on $\mathbb R^2$  or  $\mathbb C^2$ is not an injective map, and so most of the tools used on the above papers fail in this case. Accordingly, the natural framework for  studying $S$ as a plane dynamical system is the iteration of rational-like maps on $\mathbb R^2$ (see \cite{PlaneDenominator1,PlaneDenominator2,PlaneDenominator3}, and references therein, for a more complete discussion). We introduce here the notation we need to state our main results. Consider the family of maps (which include the secant map $S$)

\begin{equation}
T: \left( 
\begin{array}{l}
x \\
y
\end{array}
\right) \mapsto  
 \left( 
\begin{array}{l}
F(x,y) \\
N(x,y)/D(x,y)
\end{array}
\right),
\label{eq:PlaneDenominator}
\end{equation}
where $F$, $N$ and $D$ are differentiable functions. Set  
\[
\delta_T = \{ (x,y) \in \mathbb R^2 \, | \, D(x,y)=0\} \quad {\rm and} \quad E_T =\mathbb R^2 \setminus \bigcup_{n \geq 0} T^{-n}(\delta_T).
\]
Easily $E_T$ defines a natural subset of $\mathbb R^2$ where all iterates of  $T$ are well-defined, and so $T:E_T\to E_T$ defines a smooth dynamical system. In contrast, roughly speaking, $T$ {\it sends} points of $\delta_T$ to infinity since the denominator $D$ is zero, except at those points of $\delta_T$ where also the numerator $N$ is zero and hence the value of $T$ is uncertain.

Denote by $T_2$ second component of $T$. We say that a point  $Q$ in $ \delta_T$ is a  {\bf focal point} if $T_2$ evaluated at $Q$ takes the form 0/0 (i.e. $N(Q)=D(Q)=0$), and there exists a smooth simple arc $\gamma:=\gamma(t),\ t\in (-\varepsilon,\varepsilon)$, with $\gamma(0)=Q$, such that $\lim_{t \to 0} T_2(\gamma)$ exists and it is finite. Moreover, the line $L_Q=\{(x,y)\in \mathbb R^2 \ | \ x=F(Q)\}$ is called the {\bf prefocal line}. If we assume that $\gamma$ passes through $Q$,  not tangent to $\delta_T$, with slope $m$ (that is $\gamma^\prime(0)=m$), then $T\left(\gamma\right)$ will be a curve passing through some point  $(F(Q),y(m)) \in L_Q$ (at $t=0$). Precisely
\begin{equation}\label{eq:limit_m}
y(m) = \lim_{t \to 0} \frac{N(\gamma(t))}{D(\gamma(t))}.
\end{equation} 
In particular $T$, as a map from $\mathbb R^2$ to $\mathbb R^2$, is not continuous at the focal points. See Figure \ref{fig:focals}. However next result shows that the relation between the slope $m=\gamma^\prime(0)$ and the point $y(m)\in L_Q$ is a continuous and one-to-one map.

\begin{figure}[ht]
    \centering
     \includegraphics[width=0.75\textwidth]{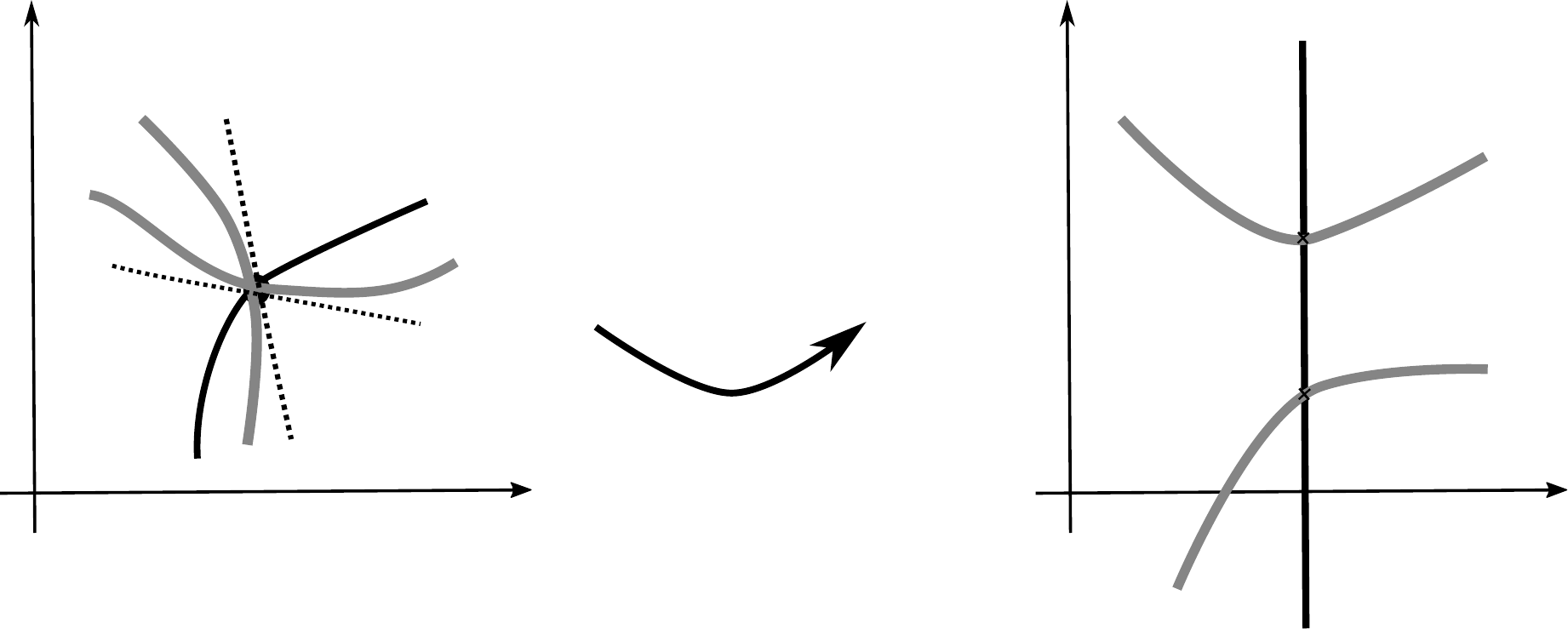}
    \put(-180,38) {\small $T$ } 
     \put(-305,112){\small $\gamma_1$}
     \put(-325,92){\small $\gamma_2$}
     \put(-255,58){\small $m_1$}
     \put(-275,35){\small $m_2$}
     \put(-273,60){\small $Q$}
     \put(-53,77){\small $y(m_1)$}
     \put(-53,42){\small $y(m_2)$}
     \put(-250,93){\small $ \delta_T\, [D(x,y)=0]$}
     \put(-15,100){\small $T(\gamma_1$)}
     \put(-12,52){\small $T(\gamma_2$)}
      \put(-50,6){\small $L_Q \, [  x = F(Q) ] $}
           \caption{\small{Discontinuity of $T$ at a focal point $Q$.}}
    \label{fig:focals}
    \end{figure}

\begin{theorem}[{\cite{PlaneDenominator1}}]
\label{th:Gardini}
Let $T$ be the rational map described in (\ref{eq:PlaneDenominator}). Let $Q$ be one of its focal points and assume $N_x(Q)D_y(Q)-N_y(Q)D_x(Q)\neq 0$. Then there is a one-to-one correspondence between the slopes $m$ of an arc $\gamma$ through $Q$ not tangent to $\delta_T$, and the points $(F(Q),y)\in L_Q$. The correspondence writes as 
\begin{equation}
\begin{split}
& m \mapsto (F(Q),y(m)) \quad {\rm with} \quad y(m)= \frac{N_x(Q) + m N_y(Q)}{D_x(Q)+mD_y(Q)} \\
&(F(Q),y) \mapsto m(y) \quad {\rm with} \quad m(y)=\frac{D_x(Q) y- N_x(Q)}{N_y(Q)-D_y(Q)y}
\end{split}
\label{eq:y(m)}
\end{equation}
\end{theorem}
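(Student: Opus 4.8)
The plan is to evaluate the limit (\ref{eq:limit_m}) directly by a first-order Taylor expansion and then recognize the resulting expression in the slope variable as a non-degenerate fractional linear transformation, whose invertibility is guaranteed by the hypothesis $N_x(Q)D_y(Q)-N_y(Q)D_x(Q)\neq 0$.

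First I would set $Q=(x_0,y_0)$ and let $\gamma(t)=(x(t),y(t))$ be a smooth arc with $\gamma(0)=Q$ and velocity $\gamma'(0)=(a,b)\neq(0,0)$, so that for a non-vertical arc the slope is $m=b/a$. Since $Q$ is a focal point we have $N(Q)=D(Q)=0$, and hence the first-order expansions along $\gamma$ read
\[
N(\gamma(t)) = t\,\bigl(N_x(Q)\,a + N_y(Q)\,b\bigr) + O(t^2), \qquad
D(\gamma(t)) = t\,\bigl(D_x(Q)\,a + D_y(Q)\,b\bigr) + O(t^2).
\]
Dividing and letting $t\to 0$ cancels the common factor $t$, and dividing numerator and denominator by $a$ yields
\[
y(m) = \lim_{t\to 0}\frac{N(\gamma(t))}{D(\gamma(t))}
= \frac{N_x(Q)\,a + N_y(Q)\,b}{D_x(Q)\,a + D_y(Q)\,b}
= \frac{N_x(Q) + m\,N_y(Q)}{D_x(Q) + m\,D_y(Q)}.
\]
The decisive point is that this limit depends on the arc \emph{only} through its $1$-jet $(a,b)$, hence only through the slope $m$; all higher-order terms of $\gamma$ drop out.

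Next I would pin down the domain. For the limit to be finite one needs $D_x(Q)+m\,D_y(Q)\neq 0$. The hypothesis $N_x(Q)D_y(Q)-N_y(Q)D_x(Q)\neq 0$ in particular forces $\nabla D(Q)\neq 0$, so $\delta_T=\{D=0\}$ is a smooth curve near $Q$ whose tangent direction is annihilated by $\nabla D(Q)$. An arc $\gamma$ is tangent to $\delta_T$ at $Q$ exactly when $\gamma'(0)$ lies in that tangent line, i.e. when $\nabla D(Q)\cdot\gamma'(0)=D_x(Q)\,a+D_y(Q)\,b=0$, equivalently $D_x(Q)+m\,D_y(Q)=0$. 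Thus the non-tangency hypothesis on $\gamma$ is precisely what keeps the denominator nonzero and the limit finite, matching the restriction in the statement. Finally, the map $m\mapsto y(m)$ is the fractional linear transformation with coefficient matrix $\begin{pmatrix} N_y(Q) & N_x(Q)\\ D_y(Q) & D_x(Q)\end{pmatrix}$, whose determinant is $N_y(Q)D_x(Q)-N_x(Q)D_y(Q)=-\bigl(N_x(Q)D_y(Q)-N_y(Q)D_x(Q)\bigr)\neq 0$. A Möbius map with nonzero determinant is a bijection of $\mathbb{RP}^1$, so $m\mapsto y(m)$ is one-to-one; solving $y\bigl(D_x(Q)+m\,D_y(Q)\bigr)=N_x(Q)+m\,N_y(Q)$ for $m$ gives $m\bigl(D_y(Q)\,y-N_y(Q)\bigr)=N_x(Q)-D_x(Q)\,y$, that is $m(y)=\dfrac{D_x(Q)\,y-N_x(Q)}{N_y(Q)-D_y(Q)\,y}$, the stated inverse.

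The main obstacle is not the algebra but the two limiting features. One must justify rigorously that the $0/0$ limit exists and is genuinely independent of the chosen arc of a given slope; the Taylor argument above is what settles this by reducing everything to the $1$-jet, but it relies on $\nabla D(Q)\cdot\gamma'(0)\neq 0$, so it must be interleaved with the tangency discussion rather than treated separately. The secondary subtlety is the vertical-slope case ($a=0$, $m=\infty$) together with the point at infinity of $L_Q$; these are cleanest to absorb into the projective picture, in which the correspondence is an automorphism of $\mathbb{RP}^1$, and this is where I would be most careful to phrase the count so that no slope is lost or double-counted.
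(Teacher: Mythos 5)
Your proposal is correct, and in fact there is nothing in the paper to compare it against: Theorem \ref{th:Gardini} is quoted from \cite{PlaneDenominator1} (Bischi--Gardini--Mira) and the paper gives no proof of it. Your argument is essentially the standard one from that source: expand $N$ and $D$ to first order along the arc, observe that the $0/0$ limit depends only on the $1$-jet $(a,b)$ and equals $\frac{N_x(Q)+mN_y(Q)}{D_x(Q)+mD_y(Q)}$, identify non-tangency to $\delta_T$ with non-vanishing of the denominator $D_x(Q)+mD_y(Q)$ via the implicit function theorem (using that the determinant hypothesis forces $\nabla D(Q)\neq 0$), and conclude injectivity from the fact that $m\mapsto y(m)$ is a M\"obius map with determinant $-\bigl(N_x(Q)D_y(Q)-N_y(Q)D_x(Q)\bigr)\neq 0$. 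Your closing remark is also the right bookkeeping: on $\mathbb{RP}^1$ the excluded tangent slope is precisely the preimage of the point at infinity of $L_Q$, so restricting to non-tangent arcs gives a bijection onto the finite points $(F(Q),y)$ of the prefocal line, with the vertical slope $m=\infty$ handled uniformly.
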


We denote by $\cal Q_T$ the set of all focal points of $T$ given by  (\ref{eq:PlaneDenominator}). Notice that $\cal Q_T$ as well as all prefocal lines belong to $\delta_T$ but they play a key role on the understanding of the global dynamics of the dynamical system generated by $T$ on $E_T$.  We are ready to state our main results.

\subsection*{Statement of the main results} 
Theorem A is about the shape and distribution of the basins of attraction of the fixed points of $S$, in particular we show that any focal point belongs to the boundary of the  basin of attraction of all the roots of the polynomial $p$ (see (\ref{eq:poly_p})). 
Theorem A shows that the focal points are surrounded by initial seeds corresponding to all basins of attractions (statement (d)). So, nearby focal points would be a natural place to find out good seeds converging to {\it all} roots of $p$, an important issue from the practical use of the secant method as a root finding algorithm. We do not ignore (see statement (c)) that focal points are related to the roots of $p$ (which we do not know!).  However notice that the smallest ($\alpha_1$) and largest ($\alpha_n$) real roots of $p$ has unbounded immediate basin of attraction (see statement (b)) which make them easy to compute numerically. Moreover the corresponding focal point $Q_{1,n}=(\alpha_1,\alpha_n)$ (statement (c)) is of special interest because the {\it tunnels size} of the attracting basins next to it are significantly width. Further work on this direction is in process.

\begin{thmA}\label{theo:basins}
The secant map $S$ induces a smooth dynamical system on $E_S$. Moreover the following statements hold.
\begin{itemize}
\item[(a)] The only fixed points of $S$ are the points 
$\left(\alpha_\ell,\alpha_\ell\right),\ \ell=1,\ldots,n$, and they are all attracting.
\item[(b)] Each basin of attraction $A\left(\alpha_\ell\right),\ \ell=1,\ldots n$, is unbounded.  If $n=1$ or $n=2$, then $A^\star\left(\alpha_1\right)$ (and $A^\star\left(\alpha_2\right)$, if $n=2$) are unbounded. If $n\geq 3$ then $A^\star\left(\alpha_1\right)$ and $A^\star\left(\alpha_n\right)$ are unbounded while $A^\star\left(\alpha_\ell\right)$ with $2 \leq \ell \leq n-1$ are bounded.
\item[(c)] Let $Q_{i,j}:=(\alpha_i,\alpha_j)$ for $i,j=1\ldots n, \ i\ne j $. The set of focal points of the secant map is given by 
$$
\cal Q:=\cal Q_S = \bigcup_{\substack{i,j=1,\ldots , n  \\ i\ne j}}   Q_{i,j}  . 
$$
\item[(d)]Each focal point belongs to the common boundary of all basins of attraction, that is,
$$
\displaystyle Q_{i,j}\in \bigcap_{\ell  =1, \ldots, n}\partial A\left(\alpha_\ell\right) .
$$ 
\end{itemize} 
\end{thmA}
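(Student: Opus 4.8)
The plan is to first remove the spurious singularity along the diagonal and then dispatch (a) and (c) by direct computation, reserving the genuine work for the boundedness assertion in (b). I would begin by rewriting the second component through the divided difference $g(x,y):=(p(y)-p(x))/(y-x)$, a genuine polynomial with $g(x,x)=p'(x)$, so that $S(x,y)=\bigl(y,\;y-p(y)/g(x,y)\bigr)$ is smooth off $\{g=0\}$ and, the roots being simple, smooth at each $(\alpha_\ell,\alpha_\ell)$ since $g(\alpha_\ell,\alpha_\ell)=p'(\alpha_\ell)\neq0$. A fixed point forces $y=x$ from the first coordinate and then $p(x)=0$, giving exactly the points $(\alpha_\ell,\alpha_\ell)$; differentiating the smooth form there and using $p(\alpha_\ell)=0$ yields
\[
DS(\alpha_\ell,\alpha_\ell)=\begin{pmatrix}0&1\\ 0&0\end{pmatrix},
\]
a nilpotent matrix with both eigenvalues $0$, so every fixed point is (super)attracting, which is (a). For (c) I would solve $N=D=0$: on $D=0$ one has $p(x)=p(y)=:c$ and $N=c(x-y)$, so either $x=y$ or $c=0$. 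The diagonal is precisely the factor $(y-x)$ cancelled above and there $N_xD_y-N_yD_x=0$, so the nondegeneracy hypothesis of Theorem~\ref{th:Gardini} fails and these are not focal points (every transverse arc gives the single Newton value $N_p(c)$). The case $c=0$ gives $p(x)=p(y)=0$ with $x\neq y$, i.e. the $Q_{i,j}$, for which $N_xD_y-N_yD_x=p'(\alpha_i)p'(\alpha_j)(\alpha_i-\alpha_j)\neq0$; hence the focal set is exactly $\{Q_{i,j}\}$, with prefocal line $L_{Q_{i,j}}=\{x=\alpha_j\}$ since $F(x,y)=y$.

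The engine for (b) is the identity $S(x,\alpha_\ell)=(\alpha_\ell,\alpha_\ell)$ for every $x$ with $p(x)\neq0$, which follows at once from $S_2(x,\alpha_\ell)=(x\,p(\alpha_\ell)-\alpha_\ell p(x))/(p(\alpha_\ell)-p(x))=\alpha_\ell$. Thus each horizontal line $\{y=\alpha_\ell\}$ (minus the roots on it) is sent in one step to the fixed point, so it lies in $A(\alpha_\ell)$ and every basin is unbounded. For the extreme roots the connected piece of this line through the fixed point is a full ray, $\{(x,\alpha_1):x<\alpha_2\}$ and $\{(x,\alpha_n):x>\alpha_{n-1}\}$, which is unbounded and lies in a single component; this shows $A^\star(\alpha_1)$ and $A^\star(\alpha_n)$ are unbounded and simultaneously covers the cases $n=1,2$. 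For an interior root ($n\geq3$) the same piece is only the bounded segment between the focal points $(\alpha_{\ell-1},\alpha_\ell)$ and $(\alpha_{\ell+1},\alpha_\ell)$, so the line no longer forces unboundedness.

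The main obstacle is precisely the boundedness of $A^\star(\alpha_\ell)$ for interior roots when $n\geq3$: since $A(\alpha_\ell)$ is itself unbounded, I must rule out any two–dimensional detour reconnecting the central cell to the far pieces of $A(\alpha_\ell)$. I expect to argue that the adjacent focal points act as pinch points of $\partial A(\alpha_\ell)$, and that near infinity the dynamics are governed by the approximation $S(x,y)\approx(y,y)$ valid when $|x|\gg|y|$, followed by the Newton map along the diagonal, which drives large seeds into the (unbounded) basins of the extreme roots $\alpha_1,\alpha_n$. These unbounded basins then enclose the interior cell and confine $A^\star(\alpha_\ell)$ to a bounded region. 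Turning this enclosure into a proof—most cleanly through the compactification of the phase space announced in the introduction—is the technical heart of the statement.

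Finally, (d) admits a clean argument via preimages of the horizontal lines. For each $\ell$ set
\[
\Gamma_\ell:=S^{-1}\bigl(\{y=\alpha_\ell\}\bigr)=\bigl\{(x,y)\ :\ (x-\alpha_\ell)\,p(y)=(y-\alpha_\ell)\,p(x)\bigr\},
\]
obtained from $S_2(x,y)=\alpha_\ell$. By the previous paragraph $\{y=\alpha_\ell\}\subset A(\alpha_\ell)$, hence $\Gamma_\ell\subset A(\alpha_\ell)$ away from the roots. A one–line check shows that every focal point lies on every such curve: at $Q_{i,j}=(\alpha_i,\alpha_j)$ both sides vanish because $p(\alpha_i)=p(\alpha_j)=0$. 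Moreover the gradient of $(x-\alpha_\ell)p(y)-(y-\alpha_\ell)p(x)$ at $Q_{i,j}$ equals $\bigl(-(\alpha_j-\alpha_\ell)p'(\alpha_i),\,(\alpha_i-\alpha_\ell)p'(\alpha_j)\bigr)\neq0$, so $\Gamma_\ell$ is a smooth arc through $Q_{i,j}$ whose punctured neighbourhood meets $A(\alpha_\ell)$. As $Q_{i,j}$ is a focal point it belongs to no basin, whence $Q_{i,j}\in\partial A(\alpha_\ell)$ for every $\ell=1,\dots,n$, proving (d). The only point requiring care here is checking that the image of $\Gamma_\ell$ near $Q_{i,j}$, namely points $(y,\alpha_\ell)$ with $y\approx\alpha_j$, avoids the other roots, which holds automatically in a small enough neighbourhood.
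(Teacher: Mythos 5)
Your handling of smoothness, (a), and (c) follows the same route as the paper (Lemmas \ref{lem:pol_q}--\ref{lem:foc}), but part (b) contains a genuine gap: the boundedness of $A^\star(\alpha_\ell)$ for an interior root ($n\geq 3$, $2\leq\ell\leq n-1$) is exactly what you do not prove. You describe what you ``expect to argue'' --- pinch points at adjacent focal points, the approximation $S(x,y)\approx(y,y)$ near infinity, a compactification --- and call it the technical heart, but that route is both unnecessary and hard to close: knowing where far-away orbits eventually converge does not by itself prevent the connected component of $A(\alpha_\ell)$ through $(\alpha_\ell,\alpha_\ell)$ from escaping sideways, and your horizontal lines $\{y=\alpha_{\ell\pm1}\}$ only bound an unbounded strip, which can never enclose anything. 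The missing observation is that vertical lines are absorbed just as horizontal ones are: for $p(y)\neq 0$ one has $S(\alpha_m,y)=(y,\alpha_m)$ and then $S(y,\alpha_m)=(\alpha_m,\alpha_m)$, so $\{x=\alpha_m\}$ minus the focal points also lies in $A(\alpha_m)$. Hence the boundary of the square $R_\ell$ with opposite corners $(\alpha_{\ell-1},\alpha_{\ell-1})$ and $(\alpha_{\ell+1},\alpha_{\ell+1})$ is contained in $A(\alpha_{\ell-1})\cup A(\alpha_{\ell+1})$ together with finitely many focal points, all disjoint from $A(\alpha_\ell)$; since $A^\star(\alpha_\ell)$ is connected, contains $(\alpha_\ell,\alpha_\ell)$ in the interior of $R_\ell$, and cannot meet $\partial R_\ell$, it is contained in $R_\ell$ and is bounded. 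This purely topological enclosure is how the paper concludes (Lemma \ref{lem:attracting_basins}); no analysis at infinity is required.

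By contrast, your proof of (d) is correct and genuinely different from the paper's, and more elementary. The paper fixes $Q_{i,j}$, uses Theorem \ref{th:Gardini} to place arcs of generic slope inside $A(\alpha_j)$, and then treats the finitely many singular slopes $m_\ell$ (those sent to focal points $Q_{j,\ell}$ of the prefocal line) by a second-order analysis: the family of parabolas $\gamma_{m_\ell,\kappa}$, the H\^opital-type Lemma \ref{lemma:hopital}, and a bijection between the curvature $\kappa$ and the slope of $S(\gamma_{m_\ell,\kappa})$ at $Q_{j,\ell}$, so that $S^2(\gamma_{m_\ell,\kappa})$ can be forced through $(\alpha_\ell,\alpha_\ell)$ itself. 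Your explicit preimage curve $\Gamma_\ell$ replaces all of this with an implicit-function-theorem check at each $Q_{i,j}$. If you write it up, make two points explicit: near $Q_{i,j}$ the branch of $\Gamma_\ell$ meets $\delta_S$ only at $Q_{i,j}$ (on $\{p(x)=p(y)\}$ your defining polynomial reduces to $p(y)(x-y)$), so its punctured branch really consists of points where $S$ is defined and $S_2=\alpha_\ell$; and the cross-multiplied equation contains the diagonal $\{x=y\}$ as a spurious component, harmless here only because $Q_{i,j}$ is off-diagonal. The same unreduced choice of $N$ and $D$ makes your discussion of (c) sit awkwardly with the paper's setup: the paper first cancels the factor $(x-y)$ (Lemma \ref{lem:pol_q}), after which regular diagonal points are not in $\delta_S$ at all, and critical diagonal points are excluded because the reduced numerator there equals $-p(x_0)\neq 0$, not because the nondegeneracy hypothesis of Theorem \ref{th:Gardini} fails (that hypothesis governs the slope correspondence, not the definition of a focal point).
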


The second and third result of this paper (Theorem B and Theorem C) deal with the existence of (unwanted) stable dynamics; that is the existence of open regions on the dynamical plane where seeds do not converge to the fixed points of $S$ associated to the roots of the  polynomial $p$. In other words both results bound the efficiency of the secant map as a root finding algorithm (see also \cite{FamiliesRational}), again a dynamical result with a relevant impact on numerical computations.  

\begin{thmB}\label{theorem:noperiod2_3} 
Let $S:E_S\to E_S$ be the dynamical system induced by the secant map defined on  (\ref{eq:secant_real}). The following statements hold.
\begin{itemize}
\item[(a)] $S$ has no periodic orbits of minimal period either two or three. 
\item[(b)] There exists a polynomial $p^{\star}$ such that $S_{p^{\star}}$ exhibits an attracting periodic orbit of minimal period four. In particular the dynamical plane has open regions of initial conditions for which $S_{p^{\star}}$ does not converge to any root of the polynomial $p^{\star}$.
\end{itemize} 
\end{thmB}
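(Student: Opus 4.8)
The plan is to exploit the special structure of the secant map, namely that its first component is the projection $(x,y)\mapsto y$. Consequently the orbit of a seed $(x_0,y_0)$ is encoded by a single scalar sequence $\{u_n\}$, where $u_n$ is the first coordinate of $S^n(x_0,y_0)$, obeying the second-order recurrence $u_{n+2}=u_{n+1}-p(u_{n+1})\frac{u_{n+1}-u_n}{p(u_{n+1})-p(u_n)}$; a point of minimal period $q$ then corresponds to a $q$-periodic sequence $u_0,\ldots,u_{q-1}$ (indices mod $q$) lying in $E_S$. The key reformulation is geometric: $u_{k+2}$ is by definition the intersection of the $x$-axis with the secant line through $(u_k,p(u_k))$ and $(u_{k+1},p(u_{k+1}))$, so each step is equivalent to the collinearity of the three points $(u_k,p(u_k))$, $(u_{k+1},p(u_{k+1}))$ and $(u_{k+2},0)$, i.e. to the vanishing of the corresponding $3\times3$ determinant.

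For statement (a) I would write these collinearity conditions cyclically. For $q=3$, putting $a=u_0,b=u_1,c=u_2$ and $P_a=p(a)$, etc., the three conditions read $P_a(c-b)+P_b(a-c)=0$, $P_b(a-c)+P_c(b-a)=0$ and $P_c(b-a)+P_a(c-b)=0$. Setting $X=P_a(c-b)$, $Y=P_b(a-c)$, $Z=P_c(b-a)$, this is the linear system $X+Y=Y+Z=Z+X=0$, whose only solution is $X=Y=Z=0$. Since consecutive orbit points must differ (otherwise $S$ is undefined), the factors $c-b,\,a-c,\,b-a$ are nonzero, forcing $P_a=P_b=P_c=0$; thus $a,b,c$ are all roots of $p$ and the orbit points $(a,b),(b,c),(c,a)$ are exactly the focal points $Q_{i,j}$ of Theorem A(c). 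The analogous computation for $q=2$ forces $p(u_0)=p(u_1)=0$. In either case the periodic orbit would lie in $\delta_S$, hence outside $E_S$, which is impossible; this proves (a).

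For statement (b) I would use the $\mathbb{Z}_2$-equivariance of $S$. If $p$ is an odd polynomial, a direct check gives $S(-x,-y)=-S(x,y)$, so any point with $S^2(z)=-z$ generates the period-four orbit $z,\,Sz,\,-z,\,-Sz$ (minimal period four, as period one would force $z=(0,0)$ and period two is excluded by (a)). In terms of the scalar sequence this is the anti-symmetric cycle $u_0,u_1,-u_0,-u_1$, and by equivariance it suffices to impose the two collinearity conditions $S_2(u_0,u_1)=-u_0$ and $S_2(u_1,-u_0)=-u_1$, the remaining two steps then closing up automatically. Working inside a one-parameter family of odd polynomials $p_\lambda$ (for instance $p_\lambda(x)=x^3-\lambda x$, whose roots $0,\pm\sqrt{\lambda}$ are simple for $\lambda>0$), this is a system of two equations in $(u_0,u_1)$ depending on $\lambda$, which I would solve for a suitable $\lambda^{\star}$ to produce the orbit. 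Attractivity is then read off from the Jacobian: since $DS=\left(\begin{smallmatrix}0&1\\ \partial_x S_2&\partial_y S_2\end{smallmatrix}\right)$, one forms the product of the four Jacobians along the orbit and verifies that both eigenvalues lie strictly inside the unit disk. Finally, the basin of an attracting periodic orbit is open and disjoint from every $A(\alpha_\ell)$, which yields the claimed open region of seeds not converging to any root.

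The main obstacle is the attractivity verification in (b): explicitly controlling the spectrum of the four-fold product $DS(z_3)DS(z_2)DS(z_1)DS(z_0)$ and showing its spectral radius is $<1$. This is a concrete but delicate computation and is precisely where an explicit choice of $p^{\star}$ (and, if needed, a computer-assisted or interval-arithmetic estimate) becomes essential; the symmetry reduction above is designed to make this product tractable, since it collapses the four independent orbit points to the two data $u_0,u_1$ and their negatives.
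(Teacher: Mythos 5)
Your treatment of part (a) is correct and takes a genuinely different route from the paper's. For period two the two arguments coincide in substance, but for period three the paper fixes an ordering $a<b<d$ and argues geometrically (from $S(d,a)=(a,b)$ one gets $p(a)p(d)<0$; then $S(a,b)=(b,d)$ forces $p(a)$ and $p(b)$ to share a sign, so the secant through $(b,p(b))$ and $(d,p(d))$ meets the axis inside $(b,d)$, contradicting $S(b,d)=(d,a)$), whereas your determinant encoding reduces everything to the linear system $X+Y=Y+Z=Z+X=0$, uniformly in the ordering and with no sign analysis. One caveat: your justification that consecutive entries $u_k,u_{k+1}$ must differ --- ``otherwise $S$ is undefined'' --- is false in the paper's setting. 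By Remark \ref{remark:q}, $S$ is defined on the diagonal away from critical points, with $S(x,x)=(x,N_p(x))$, and such points lie in $E_S$; moreover your collinearity reformulation silently breaks down there, since that step is a tangency rather than a secant. So the possibility that a three-cycle passes through a diagonal point must be excluded separately. This is quick --- if $(a,a)\mapsto(a,c)\mapsto(c,a)$ were a three-cycle, collinearity of $(a,p(a))$, $(c,p(c))$ and $(a,0)$ forces $p(a)=0$, hence $c=N_p(a)=a$ --- but it is a step your argument needs (the paper passes over the same point by asserting that $a$, $b$, $d$ are distinct).

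Part (b), however, has a genuine gap, and it sits exactly where you flag ``the main obstacle.'' The equivariance $S(-x,-y)=-S(x,y)$ for odd $p$ is correct, and a nontrivial solution of $S^2(z)=-z$ would indeed have minimal period four. But nothing in your plan produces such a solution, let alone an attracting one, and the family you propose cannot do the job: since $S_{cp}=S_p$ for any constant $c\neq 0$ and $p_\lambda(\sqrt{\lambda}\,u)=\lambda^{3/2}(u^3-u)$, every map $S_{p_\lambda}$ with $\lambda>0$ is conjugate to $S_{p_1}$ by the linear change $(x,y)\mapsto(\sqrt{\lambda}\,x,\sqrt{\lambda}\,y)$. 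Existence and attractivity of cycles are conjugacy invariants, so ``solving for a suitable $\lambda^{\star}$'' is vacuous: either $x^3-x$ already has an attracting symmetric four-cycle or no member of your family does, and you verify neither. Compare with the paper's proof, whose essential idea is precisely the decoupling you are missing: the period-four conditions constrain only the values $p(\bar a),p(\bar b),p(\bar c),p(\bar d)$, while the multiplier of the cycle depends on the derivatives $p'(\bar a),\dots,p'(\bar d)$ through the entries $A$, $B$ of (\ref{eq:def_DS}). Hermite interpolation (yielding the degree-seven polynomial (\ref{eq:p7})) lets the authors prescribe both data sets independently; choosing $p'=-1$ at all four points makes two of the four Jacobian factors nilpotent, and the product then has eigenvalues $0$ and $\frac{1}{4}\left(3-\sqrt{5}\right)^2<1$ by explicit computation. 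To complete your approach you would need a comparable mechanism --- for instance odd polynomials of high enough degree that the values and derivatives at the orbit points can be chosen freely --- which is essentially the paper's interpolation idea again.
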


In contrast to Theorem B where it is shown that $S$ (defined on $E_S$) has no periodic orbits of period two and three, Theorem C shows that {\it stable} period three cycles exist, for all polynomials $p$, if we extend the map to {\it infinity}. As we said seeds converging to this three cycle should be discard when using $S$ as a root finding algorithm. Here $\mathbb T^2_{\infty}$ is a torus minus one point (see Section \ref{sec:torus} for details).      
 
\begin{thmC}\label{theorem:bakers} 
The map $\hat{S}:\mathbb T^2_{\infty} \setminus \cal Q \mapsto \mathbb T^2_{\infty}\setminus \cal Q$ defines a smooth extension of $S$. Moreover if $x_0$ verifies $p'(x_0)=0$ then the point $(x_0,x_0)$ is periodic of minimal period three, namely $(x_0,x_0) \mapsto (x_0,\infty) \mapsto (\infty,x_0) \mapsto (x_0,x_0)$.  The eigenvalues of $D\hat{S}^3(x_0,x_0)$ are 0 and 1 if the degree of the polynomial $p$ is greater or equal than 3.
\end{thmC}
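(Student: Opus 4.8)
The plan is to construct $\hat S$ chart by chart near the circle at infinity, verify the three arrows as limits, and then obtain $D\hat S^3(x_0,x_0)$ by the chain rule across three charts adapted to the orbit.

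First I would compactify each factor $\br$ to $\br\cup\{\infty\}\cong\mathbb{S}^1$ using the coordinate $u=1/x$ near $\infty$, so that $\br^2\subset\mathbb{S}^1\times\mathbb{S}^1=\mathbb T^2$ and $\mathbb T^2_\infty=\mathbb T^2\setminus\{(\infty,\infty)\}$. The algebraic key is the simplification of the second component: writing $g(x,y)=\frac{p(y)-p(x)}{y-x}$ for the (symmetric, polynomial) divided difference, factoring $(y-x)$ out of numerator and denominator gives
\[
S_2(x,y)=\frac{xp(y)-yp(x)}{p(y)-p(x)}=x-\frac{p(x)}{g(x,y)}=y-\frac{p(y)}{g(x,y)}.
\]
This removes the spurious $0/0$ along the diagonal and displays $S$ as a rational map whose only genuine indeterminacy points are the focal set $\cal Q$. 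To read $\hat S$ near infinity I would use the reversed polynomial $\tilde p(u)=u^k p(1/u)=a_k+a_{k-1}u+\cdots+a_0u^k$, with $\tilde p(0)=a_k=1$, which turns the substitutions $x=1/u$ and $y=1/v$ into honest polynomial expressions; smoothness of $\hat S$ on $\mathbb T^2_\infty\setminus\cal Q$ then rests on $\tilde p(0)\neq0$ together with $k>1$.

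With these coordinates the three arrows are limit computations. Since all roots of $p$ are simple, a critical point is never a root, so $p'(x_0)=0$ forces $p(x_0)\neq0$; then $g(x_0,x_0)=p'(x_0)=0$ while $x_0g(x_0,x_0)-p(x_0)=-p(x_0)\neq0$, whence $S_2\to\infty$ and $(x_0,x_0)\mapsto(x_0,\infty)$. From $S_2=x-p(x)/g(x,y)$ and $g(x,y)\sim y^{k-1}\to\infty$ as $y\to\infty$ one gets $(x_0,\infty)\mapsto(\infty,x_0)$, and from $S_2\to y$ as $x\to\infty$ one gets $(\infty,x_0)\mapsto(x_0,x_0)$; as the three points are pairwise distinct the minimal period is three.

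To differentiate I would fix one chart at each orbit point --- $(x,y)$ at $P_1=(x_0,x_0)$, $(x,v)$ with $v=1/y$ at $P_2=(x_0,\infty)$, and $(u,y)$ with $u=1/x$ at $P_3=(\infty,x_0)$ --- and compute the three one-step Jacobians $J_1\colon P_1\to P_2$, $J_2\colon P_2\to P_3$, $J_3\colon P_3\to P_1$, so that by the chain rule $D\hat S^3(x_0,x_0)=J_3J_2J_1$. For $J_1$ the local representative is $(x,y)\mapsto(y,1/S_2)$ with $1/S_2=g/h$, $h=xg-p(x)$; since $h(x_0,x_0)=-p(x_0)\neq0$ this is smooth, and the standard divided-difference identity $g_x(x_0,x_0)=g_y(x_0,x_0)=\tfrac12 p''(x_0)$ yields, with $c=-\frac{p''(x_0)}{2p(x_0)}$,
\[
J_1=\begin{pmatrix}0&1\\ c&c\end{pmatrix}.
\]
For $J_2$ and $J_3$ I would expand $S_2(x,1/v)$ and $S_2(1/u,y)$ to first order using $\tilde p$; here the hypothesis $k\geq3$ is exactly what makes the two ``infinity-direction'' partial derivatives vanish (for $k=2$ each equals $-p(x_0)\neq0$), giving
\[
J_2=\begin{pmatrix}0&1\\ 1&0\end{pmatrix},\qquad J_3=\begin{pmatrix}0&1\\ 0&1\end{pmatrix}.
\]
Multiplying, $D\hat S^3(x_0,x_0)=J_3J_2J_1=\left(\begin{smallmatrix}0&1\\0&1\end{smallmatrix}\right)$, whose characteristic polynomial is $\lambda(\lambda-1)$, so the eigenvalues are $0$ and $1$.

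The main obstacle is not this matrix algebra but establishing that the chart representatives above are genuinely $C^\infty$ (indeed real-analytic) across the two lines at infinity, and correctly tracking the order in $u$ and $v$ of the second component there; this is where $\tilde p$ and the degree hypothesis do the real work, since for quadratic $p$ the surviving off-diagonal entries of $J_2,J_3$ change the spectrum of $D\hat S^3$. A secondary point to verify is that the cancellation producing $S_2=x-p(x)/g(x,y)$ indeed yields a smooth extension on a neighbourhood of each orbit point in the chosen charts, legitimising the chain rule.
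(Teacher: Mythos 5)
Your proposal is correct and follows essentially the same route as the paper: the same torus-minus-a-point compactification with charts $(x,1/y)$ and $(1/x,y)$ (your reversed polynomial $\tilde p$ is the paper's $r$), smoothness via nonvanishing denominators after substitution, the three-cycle by direct evaluation, and $D\hat S^3(x_0,x_0)$ as a chain-rule product of the same three one-step Jacobians, giving $\left(\begin{smallmatrix}0&1\\0&1\end{smallmatrix}\right)$ with eigenvalues $0$ and $1$. One minor remark: your entry $c=-\frac{p''(x_0)}{2p(x_0)}$ in $J_1$ is in fact the correct value (the paper's displayed matrix has $-\frac{p''(x_0)}{p(x_0)}$, dropping the factor $\frac12$ coming from $q_x(x,x)=\frac12 p''(x)$), but this discrepancy is immaterial since the triple product is independent of that entry.
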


While this paper has been written we learned that Theorem C has been proved independently in \cite{BedFri} where the authors deal with the secant map on $\mathbb C^2$. In fact the authors determine an open region of $\mathbb C^2$ belonging to the attracting basin of the three cycle (see our Figure \ref{fig:3cycle} for a numerical evidence).

We organize the paper as follows. In Section \ref{sec:FinitePlane} we prove Theorems A and B. In Section \ref{sec:torus} we present the extension of $S$ over the torus $\mathbb T_\infty^2$ and prove Theorem C. 

\vglue 0.3truecm
 
\subsection*{Acknowledges} The authors want to thank Armengol Gasull who point out the works of Bischi {\it et al}, and Arturo Vieiro for helpful comments on previous stages of this work. We also thank the anonymous referees for their valuable comments which substantially improve a previous version of this paper.

\section{The secant map on the real plane: Proof of Theorems A and B}\label{sec:FinitePlane}

The aim of this section is to prove Theorems A and B. The proof of Theorem A splits in several technical lemmata. Remember that $S$ is the secant map defined on (\ref{eq:secant_real}) applied to a real  polynomial of degree $k \geq 2$  
\begin{equation*}
p(x)= a_k x^k + a_{k-1}x^{k-1} + \cdots + a_1 x + a_0 , \ a_k=1,
\end{equation*}
having $1\leq n \leq k$  real roots $\alpha_1<\ldots <\alpha_n$, all simple.  The $n-k\geq 0$ roots are, if any, complex conjugate. Set

\begin{equation}\label{eq:no_def}
\begin{split}
&\delta_{S}^1=\left \{ (x,y) \in \mathbb R^2 \, | \, p(x) = p(y) \, \hbox{ with } x\neq y   \right \},\ \delta_{S}^2=\left  \{(x,x) \in \mathbb R^2 \, | \, p'(x)=0 \right\}, \\
&
\delta_S=\delta_{S}^1 \cup \delta_{S}^2 \quad  {\rm and} \quad  E_S =  \mathbb R^2 \setminus \bigcup_{n \geq 1}^{\infty} S^{-n} \left(\delta_{S}\right).
\end{split}\end{equation}

We remark that $\delta_S$ is a symmetric plane real algebraic curve intersecting the line $y=x$ precisely at points in $\delta^2_S$. According to previous discussion there is an implicit uncertainty on how to define the image at points where the denominator of the second component of $S$ is zero, i.e., where $p(x)-p(y)=0$.  The following lemmas show that indeed $S$ is well defined and smooth on $R^2 \setminus \delta_S$. We define the following auxiliary polynomials 
\begin{equation} \label{eq:q-and-q_j}
\begin{split}
&q_j(x,y):=x^{j-1}  + x^{j-2}y + \cdots xy^{j-2}+y^{j-1}, \, \ j=0,\ldots,n \\
& q(x,y):=\sum_{j=1}^k a_j q_j(x,y).
\end{split}
\end{equation}

\begin{lemma}\label{lem:pol_q}
The symmetric polynomial $q(x,y)$ defined above satisfies 
$$
p(x)-p(y)=(x-y) q(x,y).
$$ 
Moreover 
$q(x,x)=p'(x)$ and $\frac{\partial q}{\partial x}(x,x)= \frac{\partial q}{\partial y}(x,x)= \frac{1}{2}p''(x)$.
\end{lemma}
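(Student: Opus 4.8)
The plan is to build everything on the elementary geometric-sum factorization. First I would record the identity
\[
x^j - y^j = (x-y)\, q_j(x,y), \qquad j \geq 1,
\]
which is just the standard telescoping formula $x^j - y^j = (x-y)\sum_{i=0}^{j-1} x^{j-1-i} y^i$ and identifies $q_j$ as the symmetric polynomial $\sum_{i=0}^{j-1} x^{j-1-i} y^i$. Multiplying by $a_j$ and summing over $j=1,\ldots,k$ (the $j=0$ term contributes nothing, since $a_0 - a_0 = 0$) gives
\[
p(x) - p(y) = \sum_{j=1}^k a_j\,(x^j - y^j) = (x-y) \sum_{j=1}^k a_j\, q_j(x,y) = (x-y)\, q(x,y),
\]
which is the first claim.

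For the diagonal value I would evaluate $q_j$ on the diagonal directly: since each of the $j$ monomials in $q_j(x,x)$ equals $x^{j-1}$, we have $q_j(x,x) = j\,x^{j-1}$, and hence $q(x,x) = \sum_{j=1}^k j\,a_j\, x^{j-1} = p'(x)$. Equivalently, and more in the spirit of the next step, this also follows by differentiating the factorization identity in $x$ and restricting to $y=x$.

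The partial derivatives require the most care, and this is where the bookkeeping is delicate: one must distinguish the partial derivative $\frac{\partial q}{\partial x}$ evaluated on the diagonal from the total derivative of the one-variable restriction $x \mapsto q(x,x)$. I would differentiate the identity $p(x) - p(y) = (x-y)\, q(x,y)$ twice with respect to $x$, obtaining
\[
p'(x) = q(x,y) + (x-y)\frac{\partial q}{\partial x}(x,y), \qquad p''(x) = 2\frac{\partial q}{\partial x}(x,y) + (x-y)\frac{\partial^2 q}{\partial x^2}(x,y).
\]
Setting $y=x$ in the second identity annihilates the term carrying the factor $(x-y)$ and yields $\frac{\partial q}{\partial x}(x,x) = \tfrac{1}{2}p''(x)$. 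Finally, because each $q_j$, and therefore $q$, is symmetric in its two arguments, differentiating $q(x,y)=q(y,x)$ in $x$ and putting $y=x$ gives $\frac{\partial q}{\partial x}(x,x) = \frac{\partial q}{\partial y}(x,x)$, so both partials equal $\tfrac{1}{2}p''(x)$. No genuine obstacle arises here; the only point demanding attention is the consistent separation of partial versus total derivatives along the diagonal.
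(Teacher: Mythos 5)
Your proof is correct. The first half (the telescoping factorization $x^j-y^j=(x-y)q_j(x,y)$, the summation over $j$, and the diagonal evaluation $q_j(x,x)=jx^{j-1}$) coincides with what the paper does. Where you genuinely diverge is in the derivative claims: the paper computes $\frac{\partial q_j}{\partial x}(x,y)=(j-1)x^{j-2}+(j-2)x^{j-3}y+\cdots+y^{j-2}$ term by term, evaluates on the diagonal using the arithmetic sum $\sum_{m=1}^{j-1}m=\tfrac{1}{2}j(j-1)$ to get $\frac{\partial q_j}{\partial x}(x,x)=\tfrac{1}{2}j(j-1)x^{j-2}$, and sums; it then says the $y$-partial ``follows similarly.'' You instead differentiate the already-established identity $p(x)-p(y)=(x-y)\,q(x,y)$ twice in $x$, so that setting $y=x$ in
\[
p''(x) = 2\frac{\partial q}{\partial x}(x,y) + (x-y)\frac{\partial^2 q}{\partial x^2}(x,y)
\]
kills the second term and gives $\frac{\partial q}{\partial x}(x,x)=\tfrac{1}{2}p''(x)$ at once, and you then get the $y$-partial for free from the symmetry $q(x,y)=q(y,x)$. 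Your route buys economy and robustness: it needs no explicit formula for $\partial q_j/\partial x$, no arithmetic-series evaluation, and it makes the equality of the two partials a one-line consequence of symmetry rather than a repeated computation; it would also generalize immediately to smooth (non-polynomial) divided differences. The paper's route buys explicitness: it exhibits the polynomial $\partial q_j/\partial x$ itself, which is in the same concrete spirit as the rest of its computations. Your caution about distinguishing the partial derivative on the diagonal from the total derivative of $x\mapsto q(x,x)$ is well placed and is exactly the point where a careless version of your argument would go wrong; as written, your argument handles it correctly.
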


\begin{proof}
Fix a natural $j \geq 1$. Simple computations show that 
\[
x^j - y^j = (x-y)(x^{j-1}  + x^{j-2}y + \cdots xy^{j-2}+y^{j-1})=(x-y)\ q_j(x,y).
\]
Thus,
\begin{equation}\label{eq:def_q}
p(x)-p(y)=\sum_{j=1}^k a_j \left( x^j - y^j \right) = (x-y) \ \sum_{j=1}^k a_j q_j(x,y) = (x-y) \ q(x,y).
\end{equation}
In other words the factor $(x-y)$ divides $p(x)-p(y)$ and the resultant quotient is a  symmetric polynomial. Moreover, since  $ q_j(x,x)= jx^{j-1}$, we get
\[
 q(x,x)=\sum_{i=1}^k a_j q_j(x,x)=\sum_{j=1}^k a_j j x^{j-1} = p'(x).
 \]
Since  
\[
 \frac{\partial q_j}{\partial x} (x,y) = (j-1) x^{j-2} + (j-2)x^{j-3}y + \cdots + y^{j-2}
\]
we have 
$$ \frac{\partial q_j}{\partial x} (x,x)= \frac{1}{2}j(j-1) x^{j-2} \quad  {\rm and} \quad  \frac{\partial q}{\partial x} (x,x) = \frac{1}{2} \sum_{j=1}^k a_j  j(j-1) x^{j-2} = \frac{1}{2} p''(x).
$$
The result for $ \frac{\partial q}{\partial y} (x,x) $ follows similarly.
\end{proof}

\begin{lemma}\label{lem:singularity}
The secant map defined on (\ref{eq:secant_real}) writes as 
\begin{equation}
\label{eq:def_S_2}
S (x,y) = \left( y, \frac{y q(x,y)-p(y)}{q(x,y)}\right)
\end{equation}
for all  $(x,y)\in \mathbb R^2 \setminus \delta_S$. In particular $S:E_S\to E_S$ defines a smooth dynamical system. Moreover 
\begin{equation}\label{eq:def_DS}
DS\left(x,y\right) =
\left(
\begin{array}{cc}
0 & 1 \\ 
A\left(x,y\right) & B\left(x,y\right)\end{array}\right),
\end{equation}
where
\begin{equation*}
A\left(x,y\right) =\frac{p(y)}{q^2(x,y)}\frac{\partial q}{\partial x}(x,y)\quad {\rm and} \quad 
B\left(x,y\right) = \frac{p(x)}{q^2(x,y)}\frac{\partial q}{\partial y}(x,y). 
\end{equation*}
\end{lemma}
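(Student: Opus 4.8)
The plan is to establish the three assertions — the rewritten formula \eqref{eq:def_S_2}, the smoothness of the dynamical system, and the Jacobian \eqref{eq:def_DS} — in that order, leaning throughout on the factorization $p(x)-p(y)=(x-y)q(x,y)$ supplied by Lemma \ref{lem:pol_q}. First I would substitute this factorization directly into the second component of the secant map \eqref{eq:secant_real}. The denominator $p(y)-p(x)$ equals $(y-x)q(x,y)$, while the numerator carries the factor $y-x$ as well, so that $(y-x)$ cancels on the set $\mathbb R^2\setminus\delta_S$ where $x\neq y$ and $q(x,y)\neq 0$. A short simplification of
$$
y-p(y)\,\frac{y-x}{p(y)-p(x)} = y - \frac{p(y)}{q(x,y)}
$$
then yields $\frac{y\,q(x,y)-p(y)}{q(x,y)}$, which is exactly \eqref{eq:def_S_2}. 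I would stress that although the original expression is of the form $0/0$ along $\delta_S^1$ (where $p(x)=p(y)$ with $x\neq y$), the rewritten form is a genuine rational function whose only singularities are where $q(x,y)=0$; this is the point of introducing $q$.

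Smoothness is then immediate. On $\mathbb R^2\setminus\delta_S$ the map $(x,y)\mapsto(y,\,(y\,q-p(y))/q)$ is a ratio of polynomials with nonvanishing denominator $q(x,y)$, hence $C^\infty$. Because $E_S$ is obtained from $\mathbb R^2$ by deleting all preimages of $\delta_S$, every point of $E_S$ and its entire forward orbit avoid the singular set, so $S:E_S\to E_S$ is a well-defined smooth dynamical system; this reproduces the general setup of $E_T$ from the introduction in the specific case $T=S$.

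For the Jacobian I would differentiate \eqref{eq:def_S_2} componentwise. The first row is trivially $(0,1)$ since the first component is $y$. For the second component $g(x,y):=y-p(y)/q(x,y)$, I compute the partials using the quotient rule. Differentiating in $x$, only the $p(y)/q$ term depends on $x$, giving $\partial_x g = p(y)\,q_x/q^2$, which is $A(x,y)$. Differentiating in $y$ is the step that requires the most care: I obtain $\partial_y g = 1 - \big(p'(y)\,q - p(y)\,q_y\big)/q^2$, and the task is to show this collapses to $B(x,y)=p(x)\,q_y/q^2$. The key algebraic identity to exploit is that differentiating $p(x)-p(y)=(x-y)q(x,y)$ with respect to $y$ gives $-p'(y) = -q + (x-y)q_y$, i.e. $p'(y)\,q = q^2 - (x-y)q\,q_y$. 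Substituting this in and recombining, together with the relation $p(y) = p(x)-(x-y)q$ from the same factorization, should make the $(x-y)$ terms cancel and leave precisely $p(x)\,q_y/q^2$. I expect this cancellation to be the main obstacle: it is purely computational, but one must track the factorization identity and its $y$-derivative simultaneously rather than expanding $p$ and $q$ in monomials, which would be far messier. Once $\partial_y g = B(x,y)$ is verified, the matrix \eqref{eq:def_DS} follows and the proof is complete.
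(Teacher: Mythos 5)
Your proposal is correct and follows essentially the same route as the paper: the paper likewise invokes the factorization $p(x)-p(y)=(x-y)q(x,y)$ from Lemma \ref{lem:pol_q}, observes that $q$ is a polynomial vanishing exactly on $\delta_S$ so that \eqref{eq:def_S_2} is smooth there, and dismisses \eqref{eq:def_DS} as ``a direct computation''---a computation you carry out correctly via the identities $p'(y)=q-(x-y)\frac{\partial q}{\partial y}$ and $p(x)=p(y)+(x-y)q$, which indeed collapse $\partial_y\bigl(y-p(y)/q\bigr)$ to $p(x)\,q_y/q^2$. One harmless slip in a side remark: along $\delta_S^1$ the original formula \eqref{eq:secant_real} is generically of the form $(\mathrm{nonzero})/0$ rather than $0/0$; the genuine $0/0$ indeterminacy sits on the diagonal $x=y$ (and at the focal points), but this parenthetical plays no role in your argument.
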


\begin{proof} 
From the previous lemma, we have that
$$
q(x,y)=\frac{p(x)-p(y)}{x-y}, 
$$
is a symmetric polynomial not vanishing outside $\delta_S$. So $S$ is well-defined and smooth map on $\mathbb R^2 \setminus \delta_S$ defining a smooth dynamical system on $E_S$. The differential matrix  $DS(x,y)$ is a direct computation.   
\end{proof}

\begin{remark} \label{remark:q} 
Observe that for a given $(x,y)\in \mathbb R^2$ with $x\ne y$ the value of $q(x,y)$ is the slope of the secant line through the points $(x,p(x))$ and $(y,p(y))$. Moreover $S(x,x)=\left(x,N_p(x)\right)$ where $N_p$ is the Newton method applied to $p$. In particular
\begin{equation}\label{eq:DS_diagonal}
DS(x,x)= 
\left(\begin{array}{cc}
0 & 1 \\
\frac{p(x)p''(x)}{2[p'(x)]^2} & \frac{p(x)p''(x)}{2[p'(x)]^2} \end{array}\right).
\end{equation}
\end{remark}

Next lemma determines the focal points and prefocal lines for the secant map $S$.

\begin{lemma} \label{lem:foc} Let $S$ be the secant map. The following statements hold.
\begin{enumerate}
\item[(a)] Let $Q_{i,j}:=(\alpha_i,\alpha_j)$ for $i,j=1\ldots n, \ i\ne j $. The set of focal points of the secant map is given by 
$$
\cal Q:=\cal Q_S = \bigcup_{\substack{i,j=1,\ldots , n  \\ i\ne j}}   Q_{i,j}  . 
$$
\item[(b)] For a given focal point $Q_{i,j}:=\left(\alpha_i,\alpha_j\right)$, $i\neq j$, the prefocal line is given by $L_{Q_{i,j}}=\{(x,y) \in \mathbb R^2 \ | \ x=\alpha_j \}$. Hence, for a given $j$, the focal points $Q_{i,j}, \ i=1,\ldots n, \ i\ne j$ share the same prefocal line.
\end{enumerate}
\end{lemma}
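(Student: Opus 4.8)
The plan is to work directly from the denominator form \eqref{eq:def_S_2} of $S$, reading off $F(x,y)=y$, $N(x,y)=y\,q(x,y)-p(y)$, and $D(x,y)=q(x,y)$. Since $\delta_S=\{q=0\}=\{D=0\}$, every point with $D=0$ already lies on $\delta_S$, so the focal-point candidates are exactly the solutions of the system $N=D=0$ (carrying along the arc condition, which I will check afterwards). The first move is to observe that $D=q=0$ annihilates the leading term of $N$, so the numerator condition collapses to $p(y)=0$. As $y$ is real and the only real roots of $p$ are $\alpha_1,\dots,\alpha_n$, this forces $y=\alpha_j$ for some $j$.

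I would then substitute $y=\alpha_j$ into $q=0$. By Lemma \ref{lem:pol_q}, for $x\neq\alpha_j$ one has $q(x,\alpha_j)=\bigl(p(x)-p(\alpha_j)\bigr)/(x-\alpha_j)=p(x)/(x-\alpha_j)$, which vanishes exactly when $p(x)=0$, i.e. $x=\alpha_i$. The remaining possibility $x=\alpha_j$ is excluded because $q(\alpha_j,\alpha_j)=p'(\alpha_j)\neq 0$ by simplicity of the root, so $(\alpha_j,\alpha_j)\notin\delta_S$. Hence the only candidates are the points $Q_{i,j}=(\alpha_i,\alpha_j)$ with $i\neq j$, which establishes one inclusion in (a).

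For the reverse inclusion I must confirm that each $Q_{i,j}$ is a genuine focal point, that is, that a non-tangent arc with finite limit exists; this is precisely guaranteed by Theorem \ref{th:Gardini} once the non-degeneracy condition $N_xD_y-N_yD_x\neq 0$ is verified at $Q_{i,j}$. Differentiating the identity $p(x)-p(y)=(x-y)q(x,y)$ of Lemma \ref{lem:pol_q} and evaluating at $Q_{i,j}$ (where $q=0$) yields the clean values $q_x(\alpha_i,\alpha_j)=p'(\alpha_i)/(\alpha_i-\alpha_j)$ and $q_y(\alpha_i,\alpha_j)=-p'(\alpha_j)/(\alpha_i-\alpha_j)$. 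A short computation using $q(Q_{i,j})=0$ and $p(\alpha_j)=0$ then collapses the expression to $N_xD_y-N_yD_x=p'(\alpha_j)\,q_x(\alpha_i,\alpha_j)$, which is nonzero since $\alpha_i,\alpha_j$ are simple and $\alpha_i\neq\alpha_j$. This verifies the hypothesis of Theorem \ref{th:Gardini} and hence the arc condition, completing (a).

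Part (b) should follow immediately from the definition of the prefocal line $L_Q=\{x=F(Q)\}$: since $F(x,y)=y$, evaluating at $Q_{i,j}=(\alpha_i,\alpha_j)$ gives $F(Q_{i,j})=\alpha_j$, so $L_{Q_{i,j}}=\{(x,y)\in\mathbb R^2\mid x=\alpha_j\}$, which depends only on $j$; thus all focal points with a fixed second coordinate share the same prefocal line. I expect the only genuine work to be the partial-derivative bookkeeping in the third step — keeping the edge case $x=\alpha_j$ correctly excluded and confirming the non-degeneracy determinant — while the rest of the argument is forced by the reduction $N=D=0\Rightarrow\{q=0,\ p(y)=0\}$.
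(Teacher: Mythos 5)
Your proposal is correct and follows essentially the same route as the paper: solve $N=D=0$ using the factorization $p(x)-p(y)=(x-y)q(x,y)$ from Lemma \ref{lem:pol_q}, exclude the diagonal via simplicity of the roots, and read off the prefocal line from $F(x,y)=y$. The only difference is that you also verify the non-degeneracy determinant $N_x(Q)D_y(Q)-N_y(Q)D_x(Q)=p'(\alpha_i)p'(\alpha_j)/(\alpha_i-\alpha_j)\neq 0$ inside the lemma so as to certify, via Theorem \ref{th:Gardini}, that each candidate $Q_{i,j}$ genuinely satisfies the arc condition; the paper's proof of the lemma stops at the $0/0$ criterion and only performs that determinant computation later, in the proof of statement (d) of Theorem A (see (\ref{eq:theorem1.1_applies})), so your treatment of that direction is, if anything, more self-contained.
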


\begin{proof}
By definition, if $Q=\left(x_0,y_0\right)\in \mathbb R^2$ is a focal points of $S$ then the evaluation of $S_2(Q)$ (where $S_2$ denotes the second component of $S$) takes the form $0/0$. According to (\ref{eq:def_S_2}) there are no focal points on the line $x=y$, since otherwise $p(Q)=p'(Q)=0$, a contradiction with the assumption that $p$ has no multiple real roots.
Therefore, again from (\ref{eq:def_S_2}), focal points should be solutions of 
\[
\left \{
\begin{array}{lrl}
y q(x,y)-p(y) &= & 0, \\
q(x,y)&= & 0, \\
\quad\quad\quad\quad\quad\ \ x &\ne& y.
\end{array}
\right.
\]
If $x_0\ne y_0$ and $q(x_0,y_0)=0$, we conclude that $p(y_0)=0$, which in turns implies (see Lemma \ref{lem:pol_q}) $p(x_0)=0$. Therefore we conclude that $S$ has precisely $n(n-1)$ focal points located at  $Q_{i,j}=(\alpha_i,\alpha_j), \ i,j=1,\ldots n,\ i \neq j$. This proves (a).
 
\noindent From definition the prefocal line of the focal point $Q_{i,j}$, is given by 
$x=S_1\left(Q_{i,j}\right)$ being $S_1$ the first component of $S$. Thus,
$$
L_{Q_{i,j}}= \{(x,y) \in \mathbb R^2 \ | \ x=\alpha_j\},
$$
and the lemma follows. In Figure \ref{fig:focals_secant} we sketch the distribution of focal points and prefocal lines  for  $n=3$.
\end{proof}

Next lemma shows the unboundedness of the attracting basins of the fixed points of $S$.

\begin{lemma}\label{lem:attracting_basins} 
Let $S$ be the secant map. The following statements hold.
\begin{itemize}
\item[(a)] The only fixed points of $S$ are the points 
$\left(\alpha_\ell,\alpha_\ell\right),\ \ell=1,\ldots,n$, and they are all attracting.
\item[(b)] Each basin of attraction $A\left(\alpha_\ell\right),\ \ell=1,\ldots n$, is unbounded.  If $n=1$ or $n=2$, then $A^\star\left(\alpha_1\right)$ (and $A^\star\left(\alpha_2\right)$, if $n=2$) are unbounded. If $n\geq 3$ then $A^\star\left(\alpha_1\right)$ and $A^\star\left(\alpha_n\right)$ are unbounded while $A^\star\left(\alpha_\ell\right)$ with $2 \leq \ell \leq n-1$ are bounded.
\end{itemize}
\end{lemma}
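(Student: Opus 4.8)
The plan is to settle part (a) by a direct computation and to build the two basin statements in part (b) from explicit pieces of the basins, isolating the boundedness of the interior immediate basins as the technical core. For part (a), I would use the form $S(x,y)=\left(y,\frac{yq(x,y)-p(y)}{q(x,y)}\right)$ from Lemma \ref{lem:singularity}. A fixed point must satisfy $y=x$ from the first coordinate, and then $\frac{xq(x,x)-p(x)}{q(x,x)}=x$; since $q(x,x)=p'(x)\neq 0$ off $\delta_S$ (Lemma \ref{lem:pol_q}), this collapses to $p(x)=0$, so the fixed points are exactly $(\alpha_\ell,\alpha_\ell)$. For attractivity I substitute $p(\alpha_\ell)=0$ into the diagonal differential of Remark \ref{remark:q}: both entries of the bottom row carry a factor $p(x)$ and hence vanish, giving $DS(\alpha_\ell,\alpha_\ell)=\begin{pmatrix}0&1\\0&0\end{pmatrix}$, which is nilpotent with both eigenvalues $0$, so each fixed point is (super)attracting.

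For part (b) I would first observe that $S(x,\alpha_\ell)=(\alpha_\ell,\alpha_\ell)$ whenever $x$ is not a root: since $p(\alpha_\ell)=0$ and $q(x,\alpha_\ell)=p(x)/(x-\alpha_\ell)\neq 0$ for $x\notin\{\alpha_1,\dots,\alpha_n\}$, the second coordinate equals $\alpha_\ell$ (geometrically, the secant line through $(\alpha_\ell,0)$ meets the axis at $\alpha_\ell$). Thus the entire horizontal line $\{y=\alpha_\ell\}$, minus the focal points $\{(\alpha_i,\alpha_\ell):i\neq\ell\}$, lies in $A(\alpha_\ell)$, proving each total basin is unbounded. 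The connected component of this punctured line that contains $(\alpha_\ell,\alpha_\ell)$ runs between the two neighbouring focal points, so it lies in $A^\star(\alpha_\ell)$. For $\ell=1$ (respectively $\ell=n$) the left (respectively right) neighbour is absent, and the component is the unbounded ray $\{y=\alpha_1,\ x<\alpha_2\}$ (respectively $\{y=\alpha_n,\ x>\alpha_{n-1}\}$); this yields unboundedness of $A^\star(\alpha_1)$ and $A^\star(\alpha_n)$, and covers the cases $n=1,2$ where there are no interior roots.

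The crux is the boundedness of $A^\star(\alpha_\ell)$ for $2\le\ell\le n-1$. Here I would use that $S$ is defined and smooth exactly on $\mathbb R^2\setminus\delta_S$ (Lemma \ref{lem:singularity}) and that $A^\star(\alpha_\ell)\subseteq E_S\subseteq\mathbb R^2\setminus\delta_S$; being connected, $A^\star(\alpha_\ell)$ therefore sits inside a single connected component of $\mathbb R^2\setminus\delta_S$. By Lemma \ref{lem:pol_q} the barrier $\delta_S$ is precisely the zero set $\{q(x,y)=0\}$ (its off-diagonal part being $\delta_S^1$ and its diagonal part $\{p'=0\}=\delta_S^2$), so it suffices to show that the component of $\{q\neq 0\}$ containing $(\alpha_\ell,\alpha_\ell)$ is bounded. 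I would do this by exhibiting a closed branch of $\{q=0\}$ encircling the fixed point: along the diagonal $q(x,x)=p'(x)$ vanishes at the two flanking critical points $c_{\ell-1}\in(\alpha_{\ell-1},\alpha_\ell)$ and $c_\ell\in(\alpha_\ell,\alpha_{\ell+1})$ (Rolle), while on the lines through $(\alpha_\ell,\alpha_\ell)$ the curve passes through the four surrounding focal points $(\alpha_{\ell\pm1},\alpha_\ell)$ and $(\alpha_\ell,\alpha_{\ell\pm1})$ (Lemma \ref{lem:foc}); tracking the sign of $q$ then places $(\alpha_\ell,\alpha_\ell)$ in a bounded pocket.

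The hard part will be proving, for arbitrary degree $k$, that these points genuinely lie on one closed branch of $\{q=0\}$ separating $(\alpha_\ell,\alpha_\ell)$ from infinity, i.e.\ that the sign constraint on $q$ forces a bounded pocket in \emph{every} direction and not only along the axes. In the model cubic case this is immediate, since there $q(x,y)=x^2+xy+y^2+a_2(x+y)+a_1$ is a positive-definite conic, $\{q=0\}$ is an ellipse, and the interior root $(\alpha_2,\alpha_2)$ lies in its bounded interior between the two diagonal crossings $c_1,c_2$. For higher degree I expect to combine the sign analysis of $q$ with the asymptotic directions of $\{q=0\}$ (controlled by the leading form $q_k(x,y)=(x^k-y^k)/(x-y)$, which is sign-definite when $k$ is odd and vanishes only on $y=-x$ when $k$ is even) and, where a single oval does not already close up, to invoke the preimage structure making up $\partial E_S$; establishing this enclosure uniformly in $k$ is the main obstacle.
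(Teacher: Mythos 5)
Your part (a) and the unboundedness claims in part (b) are correct and essentially identical to the paper's proof: fixed points are forced to satisfy $y=x$ and $p(x)=0$ because $q(x,x)=p'(x)\neq 0$; the differential at $(\alpha_\ell,\alpha_\ell)$ is nilpotent; the punctured line $\{y=\alpha_\ell\}$ maps into the fixed point, so each total basin is unbounded; and for $\ell=1,n$ the component of that punctured line through the fixed point is an unbounded ray inside $A^{\star}(\alpha_\ell)$.

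The gap is exactly at the crux, the boundedness of $A^{\star}(\alpha_\ell)$ for $2\le\ell\le n-1$. Your reduction is: $A^{\star}(\alpha_\ell)$ lies in one connected component of $\mathbb{R}^2\setminus\delta_S=\{q\neq 0\}$, so ``it suffices'' to prove that component is bounded. The sufficiency is true, but the statement you would then need is \emph{false} in general, so this route cannot be completed. Concretely, take $p(x)=x(x-1)(x-2)(x-4)$ and the interior fixed point $(2,2)$, where $q(2,2)=p'(2)=-4<0$. Since $q$ is symmetric, work in the coordinates $s=x+y$, $t=xy$: one computes $q=(s-1)(s-2)(s-4)+(7-2s)\,t$, so for $s>7/2$ the locus $\{q<0\}$ is $\{t>\phi(s)\}$ with $\phi(s)=\frac{(s-1)(s-2)(s-4)}{2s-7}$, and $\phi(s)\to-\infty$ as $s\to(7/2)^{+}$. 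Hence the set $\{(s,t)\ :\ 7/2<s\le 4,\ \phi(s)<t\le s^{2}/4\}$ is connected, contains $(s,t)=(4,4)$ (the image of $(2,2)$), and contains points with $t\to-\infty$; lifting back to the $(x,y)$-plane gives an unbounded connected subset of $\{q<0\}$ through $(2,2)$ whose points satisfy $x\to+\infty$, $y\to-\infty$, $x+y\to 7/2$ (for instance $q\approx-0.02$ at $(x,y)\approx(11.5,-8.0)$). The mechanism is general for even degree: $\{q=0\}$ has an unbounded branch asymptotic to the line $x+y=-2a_{k-1}/k$, and the component of an interior fixed point can leak to infinity alongside that branch; your positive-definiteness remark only covers odd $k$ (and even then does not by itself close the pocket). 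So $\delta_S$ is not the barrier, and the fallback you mention (preimages of $\delta_S$ in $\partial E_S$) is not what is needed either. The paper's barrier is dynamical rather than algebraic, and it is already contained in your own first observation: the boundary of the rectangle $R_\ell=[\alpha_{\ell-1},\alpha_{\ell+1}]\times[\alpha_{\ell-1},\alpha_{\ell+1}]$ consists of segments of the lines $x=\alpha_{\ell\pm1}$ and $y=\alpha_{\ell\pm1}$, which, apart from finitely many focal points and the two corner fixed points, lie in $A(\alpha_{\ell-1})\cup A(\alpha_{\ell+1})$, a set disjoint from $A(\alpha_\ell)$. Since $A^{\star}(\alpha_\ell)$ is connected, contains $(\alpha_\ell,\alpha_\ell)$ in the interior of $R_\ell$, and cannot meet $\partial R_\ell$, it is trapped inside $R_\ell$ and is therefore bounded. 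No analysis of the algebraic curve $\{q=0\}$ is required.
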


\begin{proof}
From (\ref{eq:def_S_2}) we know that $\left(x_0,y_0\right)$ is a fixed point of $S$ if and only if  $y_0=x_0$ and $p(x_0)=0$. Thus the fixed points of $S$ are of the form 
$\left(\alpha_\ell,\alpha_\ell\right)$, where $p\left(\alpha_\ell\right)=0$ and $p'(\alpha_\ell)\ne 0$, with $\ell=1,\ldots, n$.  Moreover, from (\ref{eq:DS_diagonal}) we have 
$$
DS\left(\alpha_\ell,\alpha_\ell\right)=\left(\begin{array}{cc}
0 & 1 \\
0 & 0\end{array}\right), \ \ell=1,\ldots,n .
$$
So the two eigenvalues of $DS\left(\alpha_\ell,\alpha_\ell\right)$ are equal to  0, proving thus that fixed point $(\alpha_\ell,\alpha_\ell)$ are all attracting.  This proves statement (a). 

Fix now $\ell =1,\ldots n$. Set 
$$r_H:=\{(x,y)\in \mathbb R^2 \ | \ y=\alpha_\ell\} \ {\rm  and} \ r_V:=\{(x,y)\in \mathbb R^2 \ | \ x=\alpha_\ell\},
$$ 
the horizontal and vertical lines passing through the point $\left(\alpha_\ell,\alpha_\ell\right)$, respectively. It is easy to see from (\ref{eq:def_S_2}) that, on the one hand,  if  $x\ne \alpha_j$ then $S\left(x,\alpha_\ell\right)=(\alpha_\ell,\alpha_\ell)$; and, on the other hand, if  $y\ne \alpha_j$ then $S\left(\alpha_\ell,y\right)=\left(y,\alpha_\ell\right)$, $j=1,\ldots, n,\ j\ne \ell$. This implies that $A\left(\alpha_\ell\right)$ is unbounded since
$$
R_H:=r_H\setminus \bigcup_{j\ne \ell} Q_{j,\ell} \quad  {\rm and}\quad R_V:=r_V\setminus \bigcup_{j\ne \ell} Q_{\ell,j}
$$ 
belong to $A\left(\alpha_\ell\right)$. This prove the first assertion of statement (b).
 
The cases when $n=1$ and $n=2$ are straightforward. So we assume $n\geq 3$. Let  $ \ell =2,\ldots, n-1$. We claim that  $A^*\left(\alpha_\ell\right) \subset R_\ell$ where $R_\ell$ is the  rectangle  with vertices at the points $\left \{ \left(\alpha_{\ell-1},\alpha_{\ell-1}\right), \left(\alpha_{\ell+1},\alpha_{\ell-1}\right),\left(\alpha_{\ell+1},\alpha_{\ell+1}\right), \left(\alpha_{\ell+1},\alpha_{\ell-1}\right)   \right \}$. The claim follows since, according to the arguments above,  $\partial R_{\ell} \subset A\left(\alpha_{\ell-1}\right)\cup A\left(\alpha_{\ell+1}\right)$. If $\ell=1$ or $\ell=n$, then an unbounded piece of the line $r_{H}$ belongs to $A^*(\alpha_{\ell})$ proving that it is unbounded. See Figure \ref{fig:focals_secant} where different colours illustrate points on the basins of attraction of the three fixed points. This finish the proof of the lemma
\end{proof}

\begin{figure}[ht]
    \centering
     \includegraphics[width=0.45\textwidth]{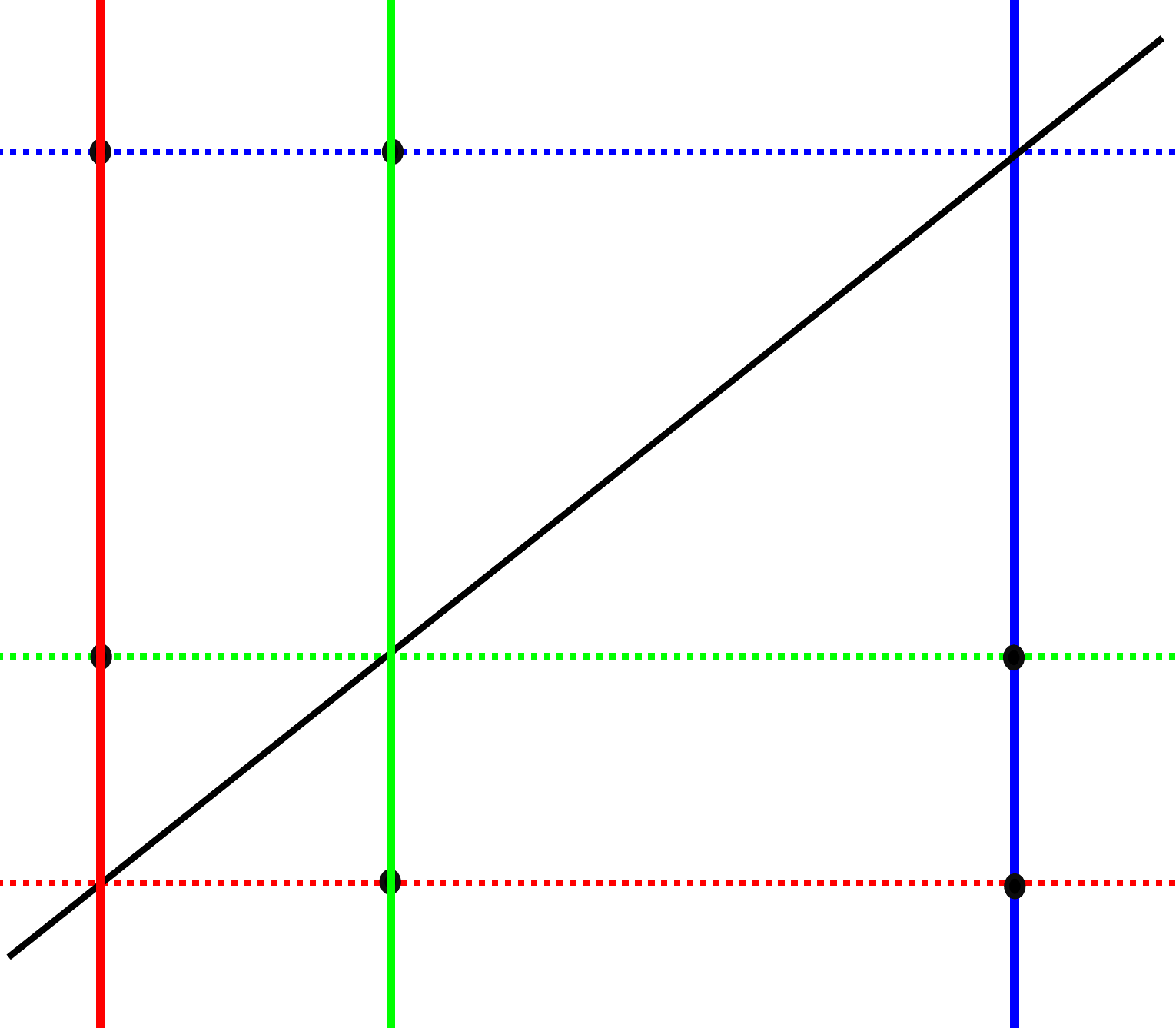}
   \put(-205,55) {\small $Q_{1,2}$ } 
      \put(-205,155) {\small $Q_{1,3}$ } 
        \put(-130,155) {\small $Q_{2,3}$ } 
       \put(-25,55) {\small $Q_{3,2}$ } 
        \put(-130,15) {\small $Q_{2,1}$ } 
         \put(-25,15) {\small $Q_{3,1}$ } 
          \put(-25,137) {\small $(\alpha_3,\alpha_3)$ } 
                    \put(-1,167) {\small $y=x$ } 
        \put(-180,15) {\small $(\alpha_1,\alpha_1)$ } 
        \put(-130,55) {\small $(\alpha_2,\alpha_2)$ } 
        \put(-190,-5) {\small $x=\alpha_1$}
        \put(-140,-5) {\small $x=\alpha_2$}
        \put(-40,-5) {\small $x=\alpha_3$}
           \caption{\small{Sketch of the dynamical plane of $S_p$ where $p$ is a polynomial with three simple real roots $\alpha_1<\alpha_2<\alpha_3$. The focal focal points $Q_{2,1}$ and $Q_{3,1}$ share the prefocal line
           $x=\alpha_1$. The focal points $Q_{1,2}$ and $Q_{3,2}$ share the prefocal line $x=\alpha_2$, and finally, the focal points $Q_{1,3}$ and $Q_{2,3}$ share the prefocal line $x=\alpha_3$. Red points are seeds converging to $(\alpha_1,\alpha_1)$, green points converge to $(\alpha_2,\alpha_2)$ and blue points converge to $(\alpha_3,\alpha_3)$, as shown in Lemma \ref{lem:attracting_basins}.}}
    \label{fig:focals_secant}
    \end{figure}

The following consequence of H\^opital's rule will be needed in the proof of statement (d) of Theorem A.
 
\begin{lemma}\label{lemma:hopital}
Let $f,g:\left(-\varepsilon,\varepsilon\right) \to \mathbb R$ be two smooth functions such that for some $a\in \mathbb R$ and $b\ne 0$ we have
\[
\lim_{t \to 0} f(t)=\lim_{t \to 0} g(t)=0,\quad \lim_{t \to 0} f'(t)=a \quad {\rm and} \quad \lim_{t \to 0} g'(t)=b.
\]
Then
\[
\lim_{t \to 0} \left( \frac{f(t)}{g(t)} \right)' = \frac{1}{2b}  \left( f''(0)-g''(0)\frac{a}{b} \right).
\]

 \end{lemma}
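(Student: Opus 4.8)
The plan is to resolve the $0/0$ indeterminacy of
$\left(\frac{f}{g}\right)' = \frac{f'g - fg'}{g^2}$ at $t=0$ by extracting the dominant order in $t$. First I would record what the hypotheses actually say: since $f$ and $g$ are smooth, continuity of $f,g,f',g'$ gives $f(0)=g(0)=0$, $f'(0)=a$ and $g'(0)=b$, and the quantities $f''(0),g''(0)$ appearing in the conclusion are well defined. This also makes clear that the expression $\frac{f'g-fg'}{g^2}$ has the form $\frac{0}{0}$ as $t\to 0$.

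The cleanest route is then to factor out the common vanishing. Because $f$ is smooth with $f(0)=0$, Hadamard's lemma (concretely, $f(t)=t\int_0^1 f'(st)\,ds$) lets me write $f(t)=t\,\tilde f(t)$ with $\tilde f$ smooth, and likewise $g(t)=t\,\tilde g(t)$. Differentiating these identities at $0$ yields $\tilde f(0)=f'(0)=a$, $\tilde g(0)=g'(0)=b$, and $\tilde f'(0)=\tfrac12 f''(0)$, $\tilde g'(0)=\tfrac12 g''(0)$. For $t\neq 0$ one has $f/g=\tilde f/\tilde g$, and since $\tilde g(0)=b\neq 0$ the function $\tilde f/\tilde g$ is smooth in a neighbourhood of $0$; hence $\left(\frac{f}{g}\right)'$ extends continuously to $t=0$ with value $\left(\frac{\tilde f}{\tilde g}\right)'(0)=\frac{\tilde f'(0)\tilde g(0)-\tilde f(0)\tilde g'(0)}{\tilde g(0)^2}$. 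Substituting the four values above gives $\frac{\tfrac12 b f''(0)-\tfrac12 a g''(0)}{b^2}$, which is exactly $\frac{1}{2b}\left(f''(0)-g''(0)\frac{a}{b}\right)$.

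As an alternative I would note that one can instead Taylor expand $f(t)=at+\tfrac12 f''(0)t^2+o(t^2)$ and $g(t)=bt+\tfrac12 g''(0)t^2+o(t^2)$ (together with the matching expansions of $f'$ and $g'$), plug into $\frac{f'g-fg'}{g^2}$, and check that the $t$-linear terms in the numerator cancel, so that numerator and denominator both behave like a constant times $t^2$; the ratio of leading coefficients returns the same value. A third option, iterating L'H\^opital's rule twice on $\frac{f'g-fg'}{g^2}$, likewise works but is less transparent.

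The only real subtlety — the hard part — is recognising that the expression is \emph{not} resolved by a single differentiation: both $f'g-fg'$ and $g^2$ vanish to order two at $t=0$, so a single application of L'H\^opital again produces a $\frac{0}{0}$ form, and the argument must carry second-order information (equivalently, pull out the factor $t^2$). Once that common factor is extracted, everything reduces to evaluating elementary quotient-rule expressions at $0$, which is routine.
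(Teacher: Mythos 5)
Your proof is correct, but it takes a genuinely different route from the paper's. The paper applies L'H\^opital's rule once to $\frac{f'g-fg'}{g^2}$, obtaining
\[
\lim_{t\to 0}\left(\frac{f''(t)}{2g'(t)}-\frac{g''(t)}{2g'(t)}\cdot\frac{f(t)}{g(t)}\right),
\]
and then resolves the remaining embedded $0/0$ quotient by using $\lim_{t\to 0} f(t)/g(t)=a/b$; this is exactly the subtlety you flagged (one differentiation does not clear the indeterminacy), and the paper handles it by splitting the expression rather than by iterating L'H\^opital. Your Hadamard factorization $f(t)=t\tilde f(t)$, $g(t)=t\tilde g(t)$ with $\tilde f(0)=a$, $\tilde g(0)=b\neq 0$, $\tilde f'(0)=\tfrac12 f''(0)$, $\tilde g'(0)=\tfrac12 g''(0)$ avoids L'H\^opital entirely: since $\tilde g(0)\neq 0$, the quotient $\tilde f/\tilde g$ is smooth near $0$ and agrees with $f/g$ off $t=0$, so the limit of $\left(\frac{f}{g}\right)'$ is just the quotient rule evaluated at $0$, which gives $\frac{1}{2b}\left(f''(0)-g''(0)\frac{a}{b}\right)$ as required. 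Your approach buys a bit more than the paper's: it shows that $f/g$ (and hence its derivative) extends smoothly across $t=0$, and it sidesteps the justification that L'H\^opital demands (existence of the limit of the derivative ratio, nonvanishing of $2gg'$ near $0$), which the paper's one-line computation passes over silently. The paper's proof, in exchange, stays at the level of elementary calculus with no auxiliary lemma. Your two fallback arguments (second-order Taylor expansion, or two iterations of L'H\^opital) are also valid and essentially equivalent to the main one.
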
 
 
\begin{proof}
Since
\[
\lim_{t \to 0} \left( \frac{f(t)}{g(t)} \right)'  = \lim_{t \to 0} \frac{f'(t)g(t)-g'(t)f(t)}{g^2(t)} = \frac{0}{0},
\]
we apply H\^opital's rule to obtain
\[ 
\lim_{t \to 0} \left( \frac{f(t)}{g(\tau)} \right)' = \lim_{t \to 0} \left( \frac{f''(t)}{2 g'(t)} - \frac{g''(t)f(t)}{2 g'(t)g(t)}  \right) = \frac{1}{2b} \left( f''(0)-g''(0)\frac{a}{b} \right).
\]
\end{proof} 

\vglue 0.3truecm 
\noindent{\it Proof of Theorem A.} A major part of the proof follows from previous lemmas. From Lemma \ref{lem:singularity} we know that the dynamical system $S:E_S \to E_S$ is smooth. Statements (a) and (b) follows from Lemma \ref{lem:attracting_basins}. Statement (c) follows from Lemma \ref{lem:foc}. So, to finish the proof of Theorem A we deal with statement (d). 

Fix $Q:=Q_{i,j}=\left(\alpha_i,\alpha_j\right)$, a focal point (so $i\ne j$) and let $U:=U_{i,j}$ be a sufficiently small punctured neigborhood of $Q$ (in particular $U$ does not intersect other focal points). See Figure \ref{fig:focals_secant_2}. We know from previous arguments (see proof of Lemma \ref{lem:attracting_basins}) that the segment of $x=\alpha_i$ in $U$ belongs to $A\left(\alpha_i\right)$ and that the segment of $y=\alpha_j$ in $U$ belongs to $A\left(\alpha_j\right)$, thus it follows that $Q\in \partial A\left(\alpha_j\right)\cap \partial A\left(\alpha_i\right)$.
To finish the proof we need to show that $Q\in \partial A\left(\alpha_\ell\right)$ for all $\ell \ne i,j$. 

Putting together (\ref{eq:PlaneDenominator}) and (\ref{eq:def_S_2})  we have
\begin{equation}\label{eq:N_i_D}
F(x,y)=y, \, N(x,y)= y q(x,y)-p(y), \ D(x,y)=q(x,y).
\end{equation}
Moreover from Lemma \ref{lem:pol_q} we also have 
\begin{equation}\label{eq:nxnydxdy}
N_x(Q) = \frac{\alpha_j \ p'(\alpha_i)}{\alpha_i-\alpha_j}, \ N_y(Q)= \frac{-\alpha_i\ p'(\alpha_j)}{\alpha_i-\alpha_j}, \ D_x(Q)=\frac{p'(\alpha_i)}{\alpha_i-\alpha_j}  \ {\rm and} \ D_y(Q)=-\frac{p'(\alpha_j)}{\alpha_i-\alpha_j}.
\end{equation}
So, we conclude
\begin{equation}\label{eq:theorem1.1_applies}
N_x(Q)D_y(Q) - N_y(Q) D_x(Q) = \frac{p'(\alpha_i) p'(\alpha_j)}{\alpha_i-\alpha_j} \ne 0.
\end{equation}

Let $\gamma_m=\gamma_m(t),\ t\in\left(-\varepsilon,\varepsilon\right)$ be a curve passing through $Q$ (at $t=0$) with slope $m:=\gamma_m^{\prime}(0)$  not tangent to $\delta_S$.
From (\ref{eq:theorem1.1_applies}) and Theorem \ref{th:Gardini}  we know there is a one-to-one correspondence between $m \in \mathbb R \cup \{\infty\}$ and the points of the prefocal line $L_Q=\{(x,y)\in \mathbb R^2 \ | \ x=\alpha_j \}$. Moreover, from (\ref{eq:y(m)}) and (\ref{eq:nxnydxdy}) we conclude that the bijection is given by 
\begin{equation}\label{eq:y(m)_Sp}
y(m)=\frac{\alpha_j p'(\alpha_i)-\alpha_i p'(\alpha_j)m}{p'(\alpha_i)-p'(\alpha_j)m} \quad {\rm or} \quad 
m(y)= \frac{p'\left(\alpha_i\right)\left(\alpha_j-y\right)}{p'\left(\alpha_j\right)\left(\alpha_i-y\right)},
\end{equation}
and so if $m\ne m_\ell,\ \ell = 1, \ldots, n$, with  
\begin{equation}\label{eq:singular_slopes}
m_{\ell}= \frac{p'(\alpha_i)}{p'(\alpha_j)}  \frac{\left(\alpha_j-\alpha_{\ell}\right)}{\left(\alpha_i-\alpha_{\ell}\right)}, 
\end{equation}
the curve $S\left(\gamma_m\right)$ crosses the prefocal line $L_Q$ through a point $(\alpha_j,y)$ not being a focal point (a blue point in Figure \ref{fig:focals_secant_2}). Hence, shrinking  $U$ is necessary, $\gamma_m \cap U\in A\left(\alpha_j\right)$.

\begin{figure}[ht]
    \centering
     \includegraphics[width=0.45\textwidth]{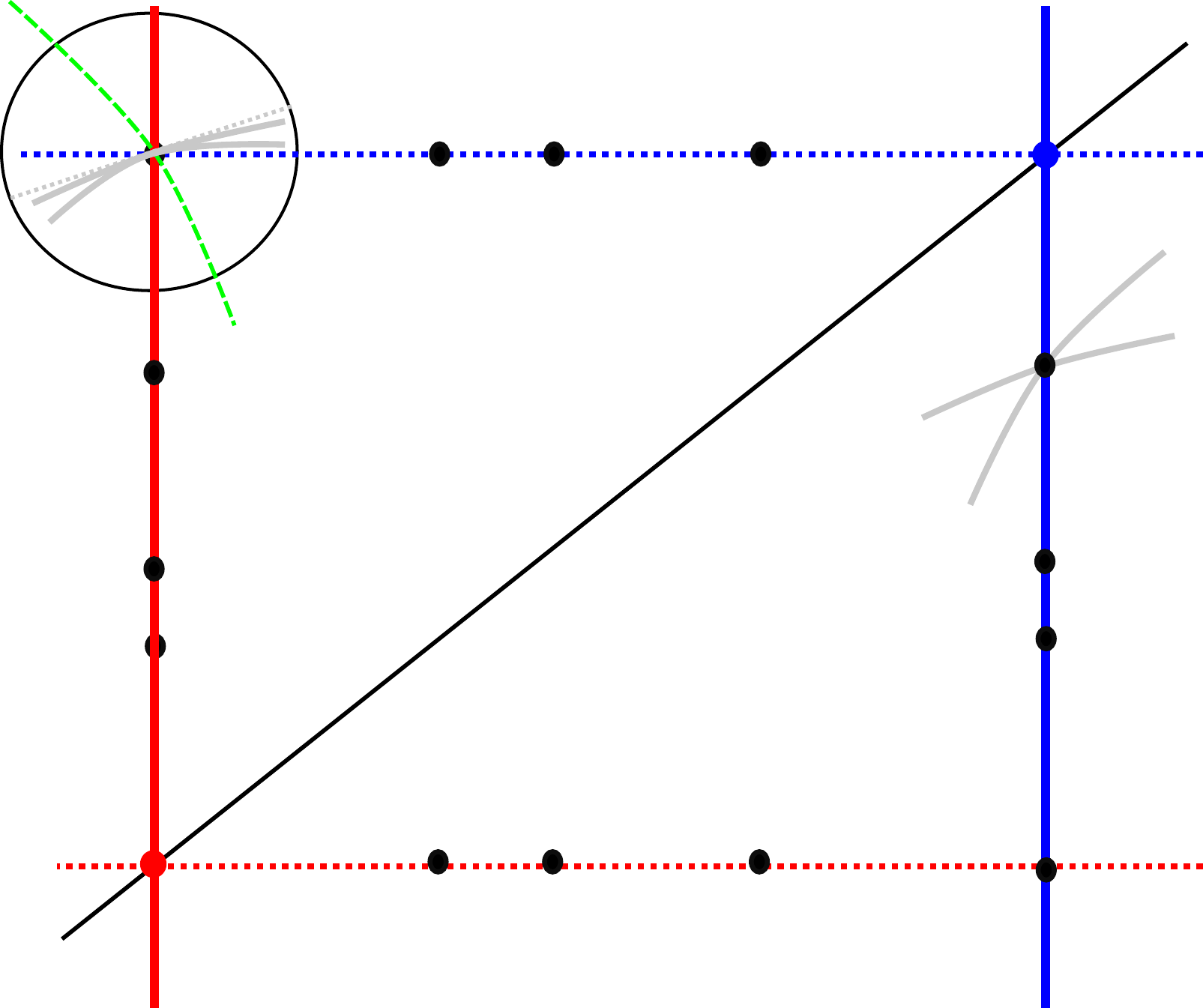}
    \put(-208,161) {\small $\delta_S$}
       \put(-208,126) {\small $U$}
    \put(-171,131) {\small $Q_{i,j}$}
       \put(-165,151) {\scriptsize $m_\ell$}
   \put(-141,131) {\small $Q_{i+1,j}$} 
    \put(-106,131) {\small $\ldots$} 
     \put(-86,131) {\small $Q_{j-1,j}$} 
      \put(-18,131) {\small $(\alpha_j,\alpha_j)$}
         \put(-18,97) {\small $Q_{j,\ell}$} 
         \put(-74,90) {\scriptsize $S\left(\gamma_{m_{\ell},\kappa}\right)$}
         \put(-220,30) {\small $(\alpha_i,\alpha_i)$}
   \put(-145,10) {\small $Q_{i+1,i}$} 
    \put(-110,10) {\small $\ldots$} 
     \put(-90,10) {\small $Q_{j-1,i}$} 
      \put(-18,10) {\small $Q_{j,i}$} 
      \put(-197,-10) {\small $x=\alpha_i$} 
      \put(-30,-10) {\small $x=\alpha_j$}
             \caption{\small{Sketch of the proof of Theorem A.}}
    \label{fig:focals_secant_2}
    \end{figure}

According to the previous paragraph we fix in what follows $\ell \ne i,j$, and consider the family of curves depending on the parameter $\kappa\in \mathbb R$ given by
\[
\gamma_{m_{\ell},\kappa}(t)=:(x(t),y(t))= \left(\alpha_i,\alpha_j\right) + (1,m_\ell) t + \frac{1}{2}(1,\kappa) t^2 = \left(\alpha_i+ t + \frac{1}{2} t^2, \, \alpha_j+m_\ell t + \frac{1}{2}\kappa t^2\right).
\]
We notice that all curves in the family pass through $Q$ with {\it singular} slope $m_\ell,\ \ell\ne i,j$. That is, $S\left(\gamma_{m_\ell,\kappa}\right)$ cross the prefocal line $L_Q$ at a focal point $Q_{j,\ell}$. The parameter $\kappa$ defines the curvature of the curve $\gamma_{m_\ell,\kappa}$ when passing through $Q$. 

We claim that there exists a one-to-one correspondence between the parameter $\kappa\in \mathbb R$ and $\left(S\circ \gamma_{m_{\ell},\kappa}\right)^{\prime}(0)$, or equivalently, we claim that choosing different values of $\kappa\in \mathbb R$ the curves $S\left(\gamma_{m_{\ell},\kappa}\right)$ pass through $Q_{j,\ell}$  with all possible slopes $m\in \mathbb R$. See Figure \ref{fig:focals_secant_2}.
Assuming the claim is true this would imply that applying $S$ once more, i.e. $S^2\left(\gamma_{m_{\ell},\kappa}\right)$, we will get curves passing through all points on the prefocal line $L_{Q_{j,\ell}}$, in particular passing through the point $\left(\alpha_\ell,\alpha_\ell\right)\in A\left(\alpha_\ell\right)$, and so we would conclude that $Q_{i,j}\in \partial  A\left(\alpha_\ell\right)$. Since this argument works for all $\ell \ne i,j$ we would have the desired result. 

Now we prove the claim. Observe that
$$
\left(S\circ \gamma_{m_{\ell},\kappa}\right)^{\prime}(0)=\left(m_\ell,\left(\frac{f(t)}{g(t)}\right)^{\prime}|_{t=0}\right),
$$
where $f(t):=N\left(x(t),y(t)\right)$ and $g(t):=D\left(x(t),y(t)\right)$, and $N$ and $D$ (numerator and denominator of the second component of $S$) are written explicitly on (\ref{eq:N_i_D}). Since $Q$ is a focal point we have
$$
\lim_{t\to 0} \frac{f(t)}{g(t)}=\frac{0}{0}.
$$
Some computations show that
$$
\lim_{t \to 0} f^{\prime}(t) = \frac{\alpha_jp^{\prime}\left(\alpha_i\right)-\alpha_ip^{\prime}\left(\alpha_j\right)m_{\ell}}{\alpha_i-\alpha_j}:=a \quad {\rm and} \quad \lim_{t \to 0} g^{\prime}(t) = \frac{p^{\prime}\left(\alpha_i\right)-p^{\prime}\left(\alpha_j\right)m_{\ell}}{\alpha_i-\alpha_j}:=b.
$$
We claim that $b\ne 0$. Indeed, otherwise, $y\left(m_\ell\right)=\infty$ while the slope $m_\ell$ corresponds to focal points of the form
$\left(\alpha_j,\alpha_\ell\right),\ \ell \ne j$. Hence, we are on the hypothesis of Lemma \ref{lemma:hopital} to get
\begin{equation} \label{eq:limit_kappa}
\lim_{\tau \to 0} \left(\frac{f}{g} \right)'(\tau)=\frac{1}{2b} \left( f''(0)-g''(0)\frac{a}{b} \right).
\end{equation}
Finally some computations show that

\[
\begin{array}{ll}
f''(t)= & (x'(t),y'(t)) \, H(N)(x(t),y(t)) \, (x'(t),y'(t))^T \, + \, \nabla (N) (x(t),y(t)) \, (x''(t),y''(t))^T \\
g''(t)= & (x'(t),y'(t)) \, H(D)(x(t),y(t)) \, (x'(t),y'(t))^T \, + \, \nabla (D) (x(t),y(t)) \, (x''(t),y''(t))^T 
\end{array}
\]
\noindent  where $H$ denotes the Hessian matrix.

Thus, 
\begin{equation}
\begin{split}
f''(0)&=  N_{xx}(Q)  + 2 N_{xy}(Q)  m_\ell  +  N_{yy}(Q)  m_\ell^2  + N_x(Q) +  N_y(Q) \kappa  \\
g''(0)&= D_{xx}(Q)  + 2 D_{xy}(Q)  m_\ell  +  D_{yy}(Q)  m_\ell^2  + D_x(Q) +  D_y(Q)  \kappa
\end{split}
\end{equation}
Substituting on the right hand side expression of (\ref{eq:limit_kappa}) we see that the $\kappa$-coefficient is given by 
$$
N_y(Q)-D_y(Q)\frac{a}{b}=N_y(Q)-D_y(Q) \alpha_{\ell}=\frac{\alpha_{\ell}-\alpha_i}{\alpha_i-\alpha_j}p'\left(\alpha_j\right)\ne 0,
$$ 
as desired. Hence statement (d) follows. \qed

In Figure \ref{fig:dyn_plane}(a)-(b) we illustrate Theorem A for a concrete polynomial of degree three with three real roots. Each root $\left(\alpha_j,\alpha_j\right),\ j=1,2,3$ has a basin of attraction associated to a different colour (red, green and blue, respectively). Fix the attention to the focal point $Q_{3,2}$ and its prefocal line given by the vertical line $x=\alpha_2$. In  Figure \ref{fig:dyn_plane}(b) we can see that in a neighbourhood of $Q_{3,2}$ all curves $\gamma:=\gamma(t),\ t\in(-\varepsilon,\varepsilon)$ passing through $Q_{3,2}$ with slope $m\ne \{m_1\approx (2\pi)/3,\infty\}$, are coloured in  green. This is so because if $m\ne \{m_1,\infty\}$, $S\left(\gamma\right)$ is  a curve passing through a point $\left(\alpha_2,y\right)\in L_{Q_{3,2}}$ with 
$y\ne \alpha_\ell$, $\ell=1,3$ and so $\gamma\in A(\alpha_2)$, if $\varepsilon$ is small enough  (see Figure \ref{fig:dyn_plane}(a)).  The situation is quite different if $m=m_1$ and $m=\infty$. For instance suppose that
$\gamma$ is a curve passing through $Q_{3,2}$ (at $t=0$) with slope $m=\infty$ (i.e., $\gamma'(0)=\infty$). From the arguments in the proof of Theorem A we know that its image is a curve passing through $Q_{2,3}\in L_{Q_{3,2}}$ with all possible slopes depending on $\gamma^{\prime\prime}(0)$. Since in turn $Q_{2,3}\in \left( \partial A\left(\alpha_1\right) \cap \partial A\left(\alpha_2\right) \cap \partial A\left(\alpha_3\right)\right)$ implies that depending on $\gamma^{\prime\prime}(0)$ we see blue, red and green near $Q_{3,2}$ when passing through this point with slope $m=\infty$. Moreover since all curves passing through the point $Q_{2,3}$ belong to the basin of attraction of $\left(\alpha_3,\alpha_3\right)$ with at most two exceptions (that is, a neighbourhood of $Q_{2,3}$ is generically blue) we see that most of the curves passing through $Q_{3,2}$ with slope $m=\infty$ are blue.

\begin{figure}[ht]
    \centering
    \subfigure[\scriptsize{Range [-1,5]x[-1,5].}  ]{
     \includegraphics[width=0.48\textwidth]{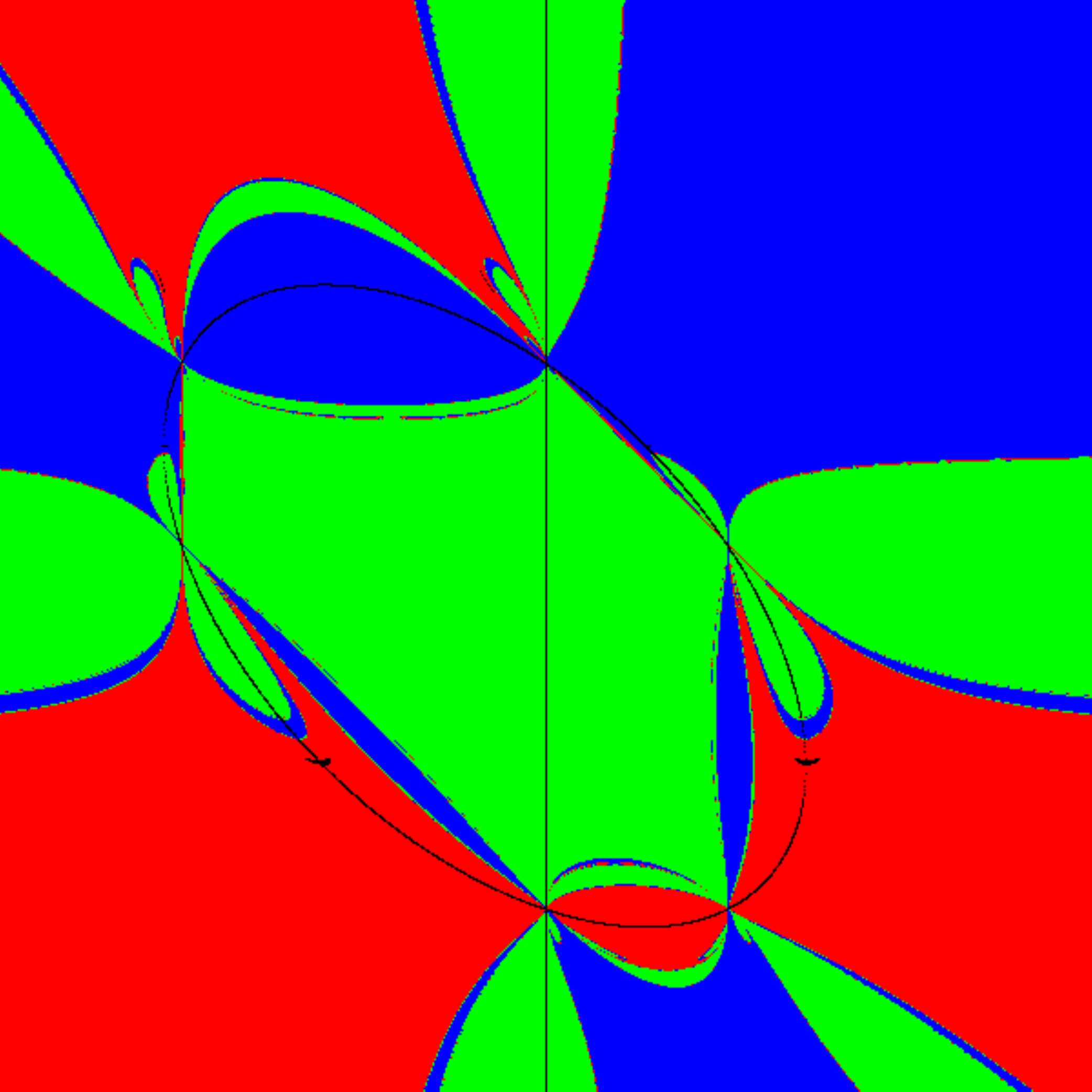}
    \put(-65,105) {\small $Q_{3,2}$ } 
     \put(-102,140) {\small {\w $Q_{2,3}$} }
      \put(-142,190) {\small $x=\alpha_2$} 
    \put(-107,105) {\circle{4}}
    \put(-143,103) {\small $(\alpha_2,\alpha_2)$}
     \put(-167,46) {\circle{4}}
         \put(-164,40) {\small $(\alpha_1,\alpha_1)$}
     \put(-67,146) {\circle{4}}
         \put(-63,143) {\small $(\alpha_3,\alpha_3)$}
      }
    \subfigure[\scriptsize{Range [2.34,3.85]x[1.28,2.79].}  ]{
     \includegraphics[width=0.48\textwidth]{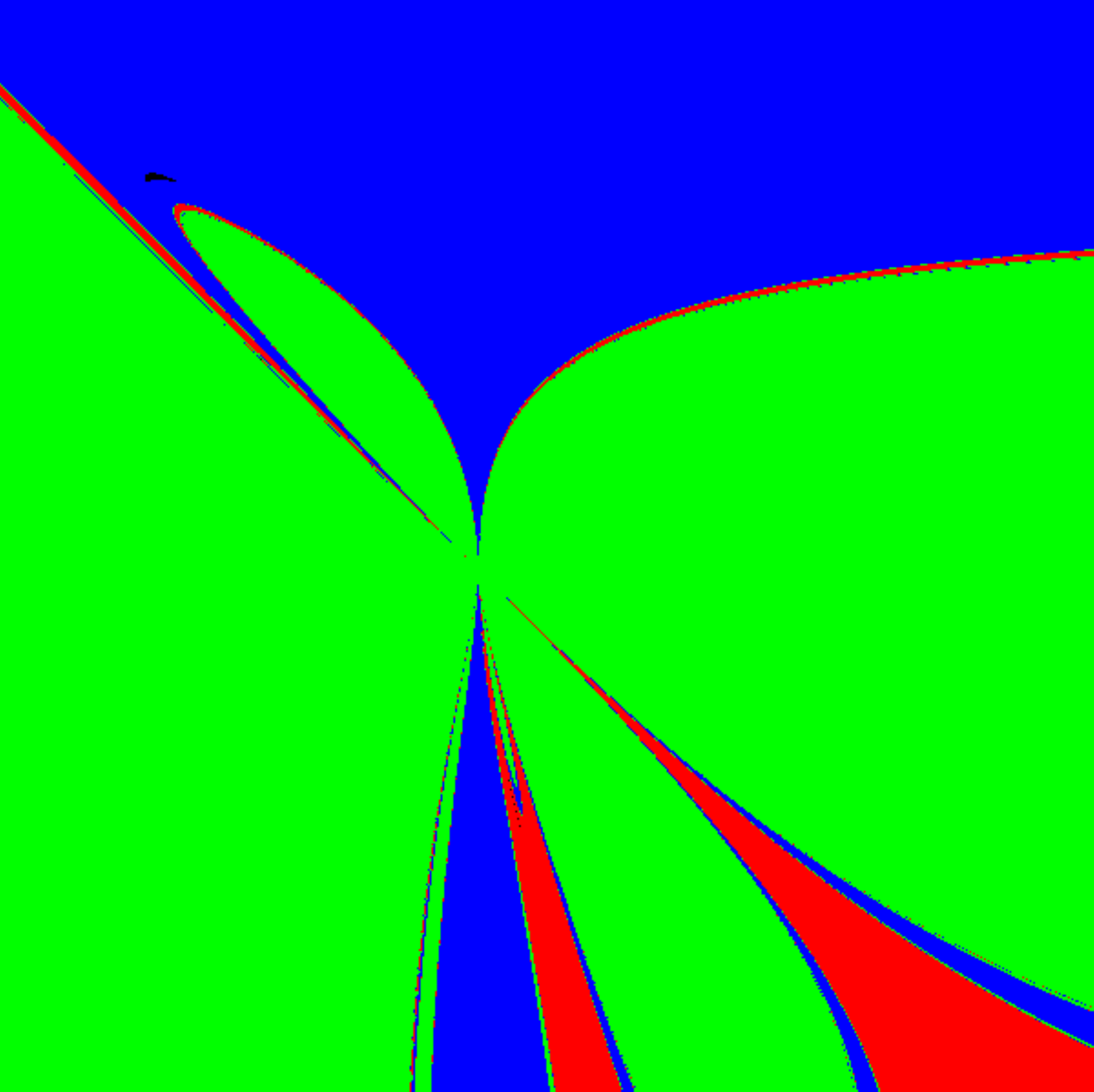}
    \put(-110,98) {\small $Q_{3,2}$ } 
 
      }   
           \caption{\small{(a) The dynamical plane of $S$ applied to the polynomial $p(x)=x(x-2)(x-3)$. We show in red the basin of attraction of $\left(\alpha_1,\alpha_1\right)=(0,0)$, in green the basin of attraction of $\left(\alpha_2,\alpha_2\right)=(2,2)$ and in blue the basin of attraction of $\left(\alpha_3,\alpha_3\right)=(3,3)$. We show the set $\delta_S$ where the map $S$ is not well defined. We also plot the prefocal line $x=\alpha_2$  associated to the focal points $Q_{3,2}$ and $Q_{1,2}$. (b) Zoom in the dynamical plane around the focal point $Q_{2,3}$. It is also shown some small moon-shaped black regions. We will explain their meaning in next section.}}
    \label{fig:dyn_plane}
    \end{figure}

\subsection{Proof of Theorem B}

We show statement (a) by contradiction. We first assume the existence of a periodic orbit of minimal period 2 in $E$, that is  $S(a,b)=(c,d)$ and $S(c,d)=(a,b)$ for some $a,b,c,d\in \mathbb R$ such that $(a,b)$ and $(c,d)$ are in $E$. From (\ref{eq:def_S_2}) we conclude that $c=b$ and $d=a$. So, $S(a,b)=(b,a)$ and $S(b,a)=(a,b)$ with $a \neq b$ (otherwise we would have a fixed point). From (\ref{eq:def_S_2}),  if $S(a,b)=(b,a)$ we conclude that  
\[
a = b - p(b)\frac{b-a}{p(b)-p(a)}.
\]
Notice that $p(a)\ne p(b)$ since $(a,b)\in E$. The above equation writes as 
\[
0= (b-a)\left[ 1- \frac{p(b)}{p(b)-p(a)}\right]=p(a) \frac{(b-a)}{p(b)-p(a)}.
\] 
Since $a\ne b$ the above equation concludes $p(a)=0$. Interchanging the role of $a$ and $b$ we also conclude $p(b)=0$. All together this implies $(a,b)\not\in E$.

We secondly assume the existence of a periodic orbit of minimal period 3 in $E$, that is $S(a,b)=(c,d)$, $S(c,d)=(e,f)$ and $S(e,f)=(a,b)$ for some $a,b,c,d,e,f\in \mathbb R$ such that $(a,b)$, $(c,d)$ and $(e,f)$ are in $E$. Arguing in a similar way as before we have that $c=b$, $e=d$ and $f=a$, so we have that $S(a,b)=(b,d)$, $S(b,d)=(d,a)$ and $S(d,a)=(a,b)$. Since the minimal period of the orbit is three we conclude that the three real numbers $a,b$ and $d$ are different. 

Without loss of generality we assume $a < b < d$ (otherwise we rename the letters). Since the secant line through $(a,p(a))$ and $(d,p(d))$ should cut the line $y=0$ at the point $x=b$ (observe that $S(d,a)=(a,b)$) we know that $p(a)p(d)<0$. Assume $p(a)>0$ and $p(d)<0$ (the other case is similar). This force $p(b)>0$, since the secant line passing through $(a,p(a))$ and $(b,p(b))$ should intersect the line $y=0$ at $x=d$ (observe that $S(a,b)=(b,d)$). Accordingly the secant line through 
$(b,p(b))$ and $(d,p(d))$ will intersect the line $y=0$ at a point $\eta \in (b,d)$, a contradiction with $S(b,d)=(d,a)$ and $a<b$. This finish the proof of statement (a).

Now we deal with statement (b) by showing of the existence of (attracting) periodic $S$-orbits of minimal period 4.  We denote by $a,b,c$ and $d$ four real numbers such that $a < b < c < d$. 

Arguing in a similar way as we did above, and after relabelling the real numbers involved in the construction of the four periodic orbit, the configuration should be as follows (see Figure \ref{fig:period4})
\begin{equation}\label{eq:secant_period4}
\begin{split}
&a<b<c<d \quad {\rm and}\\
&S(a,b)= (b,d)\quad S(b,d)= (d,c)\quad S(d,c)=(c,a)\quad S(c,a)= (a,b).
\end{split}
\end{equation}

\begin{figure}[ht]
    \centering
     \includegraphics[width=0.45\textwidth]{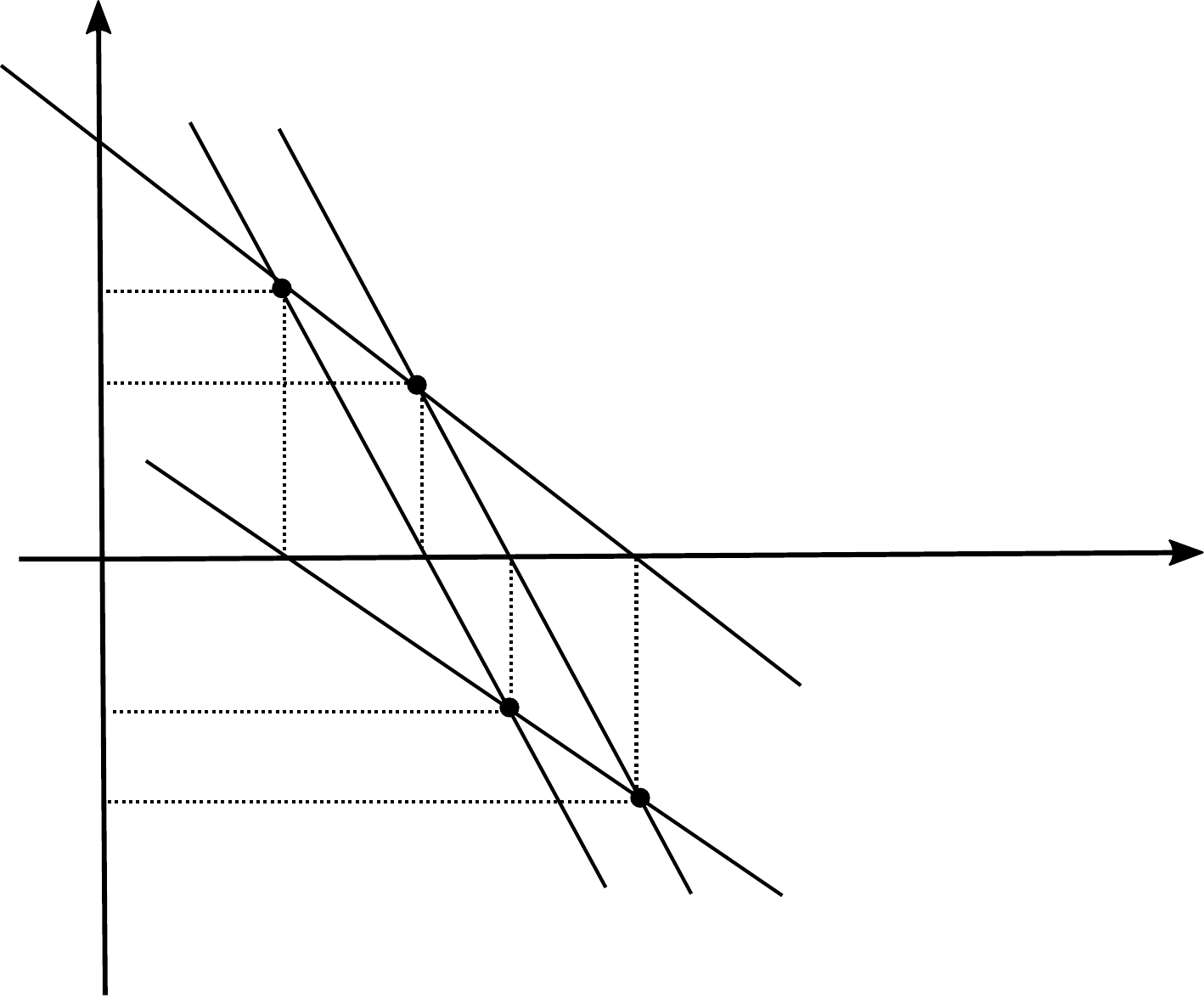}
    \put(-154,62) {\small $a$ } 
     \put(-200,112){\small $p(a)$}
     \put(-200,95){\small $p(b)$}
    \put(-200,45){\small $p(c)$}
   \put(-200,30){\small $p(d)$}
     \put(-114,75) {\small $c$} 
     \put(-1,65) {\small $x$} 
     \put(-185,170){\small $y$}
     \put(-93,75){\small $d$}
     \put(-132,62){\small $b$}
           \caption{\small{Configuration of the period 4 cycle.}}
    \label{fig:period4}
    \end{figure}

To simplify the construction we assume that the secant lines passing through $(a,p(a))$ and $(b,p(b))$, and through $(c,p(c))$ and $(d,p(d))$ have slope equal to $-1$ (observe that this is equivalent to assume $q(a,b)=q(d,c)=-1$). Under this assumption and the fact that  $S(a,b)= (b,d)$ and $S(d,c)=(c,a)$ we get
\begin{equation}\label{eq:cond_period4_1}
p(a)=d-a >0,\quad  p(b)=d-b>0,\quad p(c)= a-c<0 \quad {\rm and} \quad p(d)=a-d<0.
\end{equation}

Of course the inequalities in (\ref{eq:cond_period4_1}) are not enough to fulfil  (\ref{eq:secant_period4}). We should further impose that the secant line passing through $(b,p(b))$ and $(d,p(d))$ crosses the line $y=0$ at the point $x=c$, and that the line passing through $(a,p(a))$ and $(c,p(c))$ crosses the line $y=0$ at the point $x=b$. Easy computations show that these two conditions write as
\begin{equation}
\label{eq:cond_period4_2}
c= d - \frac{(a-d)(d-b)}{a+b-2d} \quad {\rm and} \quad b = a - \frac{(d-a)(a-c)}{c+d-2a}.
\end{equation}

Again doing some straightforward computations one can see that the following (approximate) parameters
$$
\bar{a}=1\ <\ \bar{b}=2\ <\ \bar{c}=\frac{1}{2}\left(3 + \sqrt{5}\right)\approx 2.618\ <\ \bar{d}=\frac{1}{2}\left(5 + \sqrt{5}\right)\approx 3.618
$$
determine a unique interpolating polynomial 
$$
p_{\bar{a},\bar{b},\bar{c},\bar{d}}(x)=2.61803 - (x-1) - 2.61803 (x-1)(x-2) + 2 (x-1)(x-2)(x-2.61803) 
$$ 
satisfying (\ref{eq:cond_period4_1}). Moreover, by construction, $S:=S_{\bar{p}}$ has a four periodic orbit at the points  
$$
\{\left(\bar{a},\bar{b}\right),\left(\bar{b},\bar{d}\right),\left(\bar{d},\bar{c}\right),\left(\bar{c},\bar{a}\right)\}.
$$

Observe that the arguments used above implicitly provide a huge family of polynomials for which the secant method has a four periodic orbit. Our aim is to find one for which the four periodic orbit is attracting. The strategy will be to keep the parameters $\bar{a}<\bar{b}<\bar{c}<\bar{d}$ satisfying (\ref{eq:cond_period4_1}), but modifying the value of the derivatives of $p$ at those points. 

Since $S_p:E\to E$ is smooth (see Theorem A) the local character of the four cycle is governed by the differential matrix 
\begin{equation}\label{eq:lambda}
\Lambda =DS(\bar{a},\bar{b}) \ DS(\bar{b},\bar{d}) \ DS(\bar{d},\bar{c}) \ DS(\bar{c},\bar{a}),
\end{equation}
where $DS$ is the differential matrix of $S$ given in (\ref{eq:def_DS}). Substituting the parameter values and remembering that $q(x,y)$ is precisely the slope of the secant line through the point $(x,p(x)$ and $(y,p(y))$ we have
\[
\begin{array}{ll}
A (\bar{a},\bar{b}) =-\frac{1}{2}\left(1+\sqrt{5}\right)\left(1+p'(\bar{a})\right)  & \qquad B(\bar{a},\bar{b}) =\frac{1}{2}\left(3+\sqrt{5}\right) (1+p'(\bar{b}))\\
 & \\
 A (\bar{b},\bar{d}) = \frac{1}{2}\left(-2+\sqrt{5}\right)\left(3 +\sqrt{5}+2p'(\bar{b})\right) & \qquad B(\bar{d},\bar{d}) =\frac{1}{4}\left(7-3 \sqrt{5}\right) \left( 3 +\sqrt{5}+2p'(\bar{d})\right)\\
& \\
 A(\bar{d},\bar{c}) =-\frac{1}{2}\left(1+\sqrt{5}\right) \left(1+p'(\bar{d})\right)  & \qquad B(\bar{d},\bar{c}) =\frac{1}{2}\left(3+\sqrt{5}\right)  \left(1+p'(\bar{c})\right)\\
 & \\
 A (\bar{c},\bar{a}) =  \frac{1}{2}\left(-2+\sqrt{5}\right)  \left(3 +\sqrt{5}+2p'(\bar{c})\right) & \qquad B(\bar{c},\bar{a}) = \frac{1}{4}\left( 7-3\sqrt{5}\right)\left( 3 +\sqrt{5}+2p'(\bar{a})\right).
\end{array}
\]
At this point we are free to choose the values of 
$\{p'(\bar{a}),p'(\bar{b}),p'(\bar{c}),p'(\bar{d})\}$. It turs out to be the case that fixing $p'(\bar{a})=p'(\bar{b})=p'(\bar{c})=p'(\bar{d})=-1$ we get 
\[
DS(\bar{a},\bar{b})= DS (\bar{d},\bar{c}) =\left(
\begin{array}{ll}
0 & 1 \\
0 & 0
\end{array}
\right) \quad {\rm and} \quad 
DS (\bar{b},\bar{d})= DS (\bar{c},\bar{a}) =\left (
\begin{array}{cc}
0 & 1 \\
\frac{1}{2}\left(3-\sqrt{5}\right) & \sqrt{5}-2
\end{array}
\right),
\]
and hence the matrix 
\[
\Lambda =  \left(
\begin{array}{ll}
\frac{1}{4}\left(3-\sqrt{5}\right)^2 & \frac{1}{2}\left(5\sqrt{5}-11\right) \\
0 & 0
\end{array}
\right)
\]
has eigenvalues given by $0$ and $\frac{1}{4}\left(3-\sqrt{5}\right)^2 \approx 0,14589803 <1$; both of modulus less than 1. Therefore using Hermite interpolation with data 
\begin{equation*}
\begin{split}
& p(\bar{a})=\bar{d}-\bar{a}, \quad p(\bar{b})=\bar{d}-\bar{b}, \quad  p(\bar{c})=\bar{a}-\bar{c}, \quad p(\bar{d})=\bar{a}-\bar{d}, \\
& p^{\prime}(\bar{a})=p^{\prime}(\bar{b})=p^{\prime}(\bar{c})=p^{\prime}(\bar{d})=-1, \\
\end{split}
\end{equation*}
we obtain the degree seven polynomial 
\begin{equation}\label{eq:p7}
\begin{array}{ll}
p^{\star}(x) & =2.61803  - (x-1)-2.61803(x-1)^2(x-2)^2  \\
& +11.70820(x-1)^2(x-2)^2(x-2.61803)\\
& -9.23607(x-1)^2(x-2)^2(x-2.61803)^2\\ 
& +7.05573(x-1)^2(x-2)^2(x-2.61803)^2(x-3.61803).
\end{array}
\end{equation}
According to the previous arguments $S_{p^{\star}}$ exhibits an attracting four periodic orbit. In Figure \ref{fig:period4_DynamicalPlane}  we show the dynamical plane of the secant map applied to this interpolating polynomial $p^\star$.\qed

\begin{figure}
    \centering
    \subfigure[\scriptsize{Range $[-1,5]\times[-1,5]$}]{
        \includegraphics[width=0.6\textwidth]{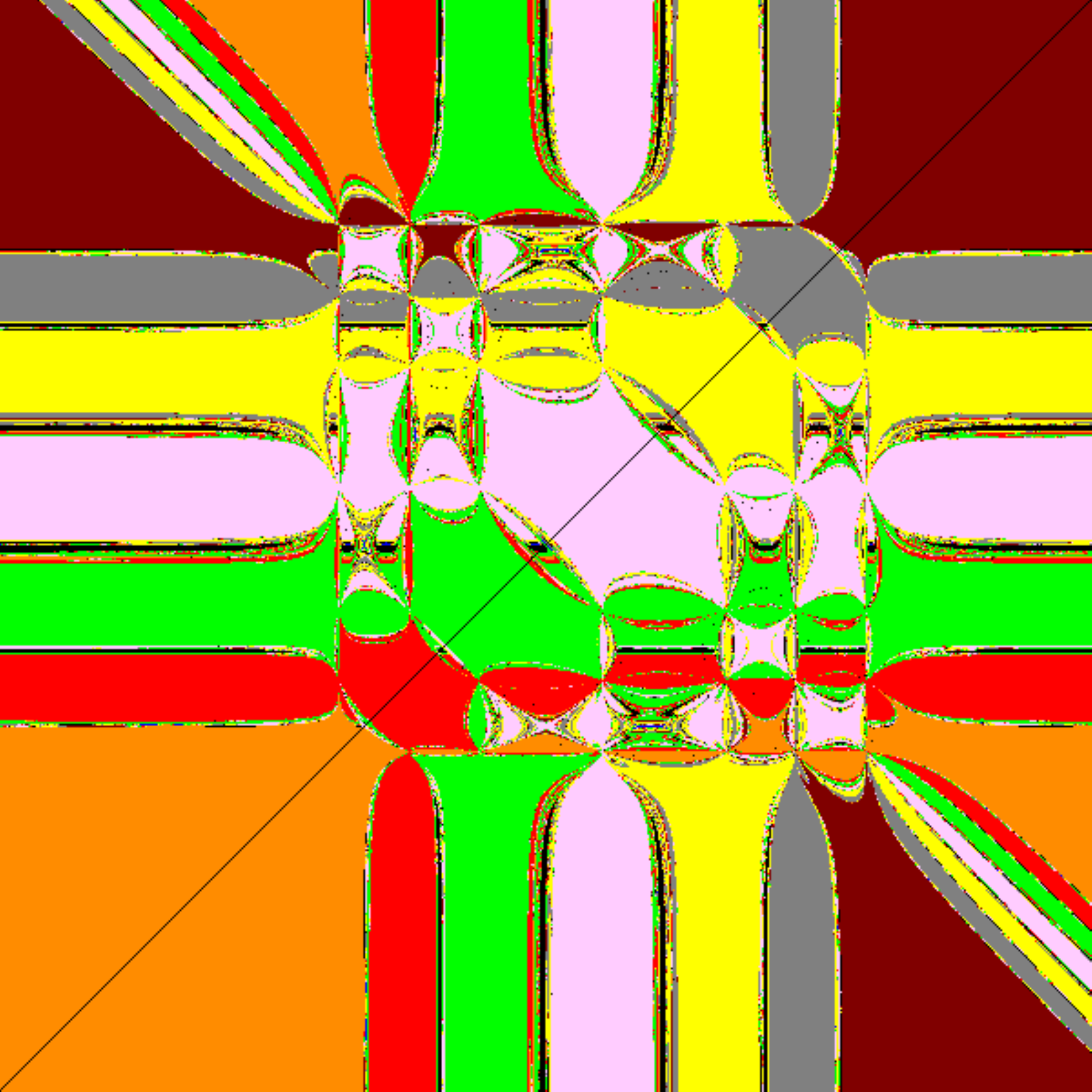}
         }
    \\
    \subfigure[]
     {
        \includegraphics[width=0.3\textwidth]{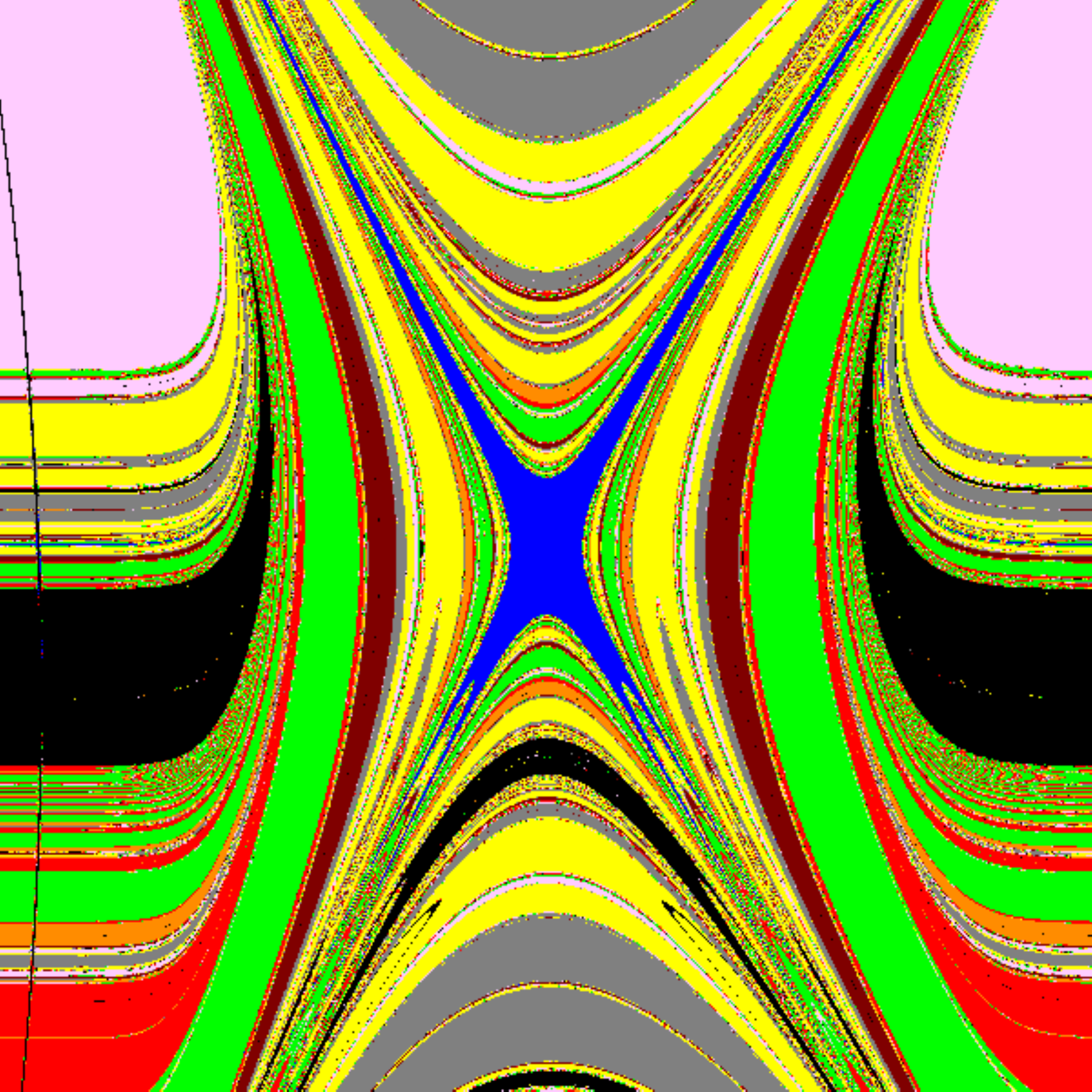}
      }
    \qquad
    \subfigure[]
    {
        \includegraphics[width=0.3\textwidth]{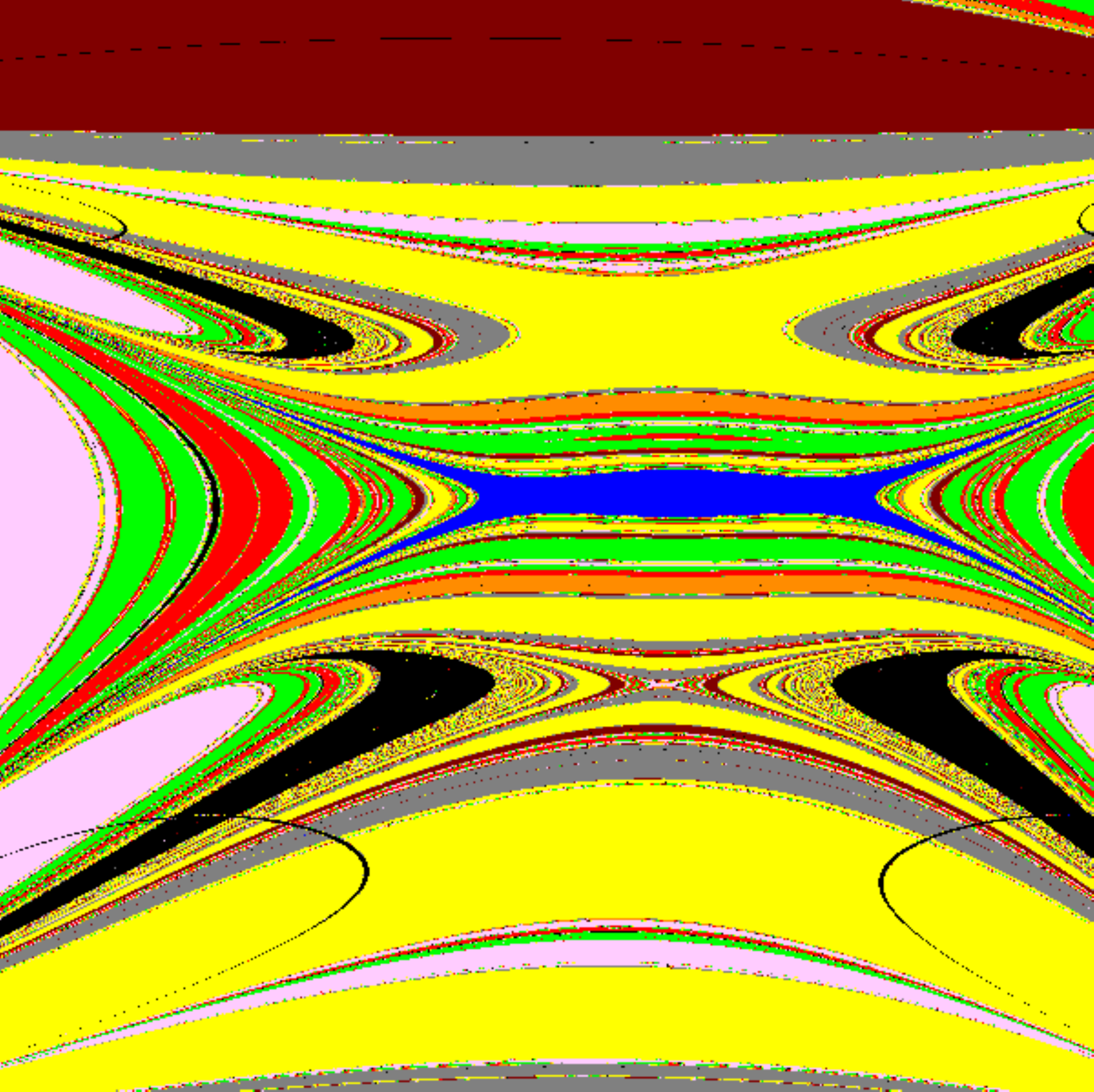}
    }
    \qquad
     \subfigure[]
    {
        \includegraphics[width=0.3\textwidth]{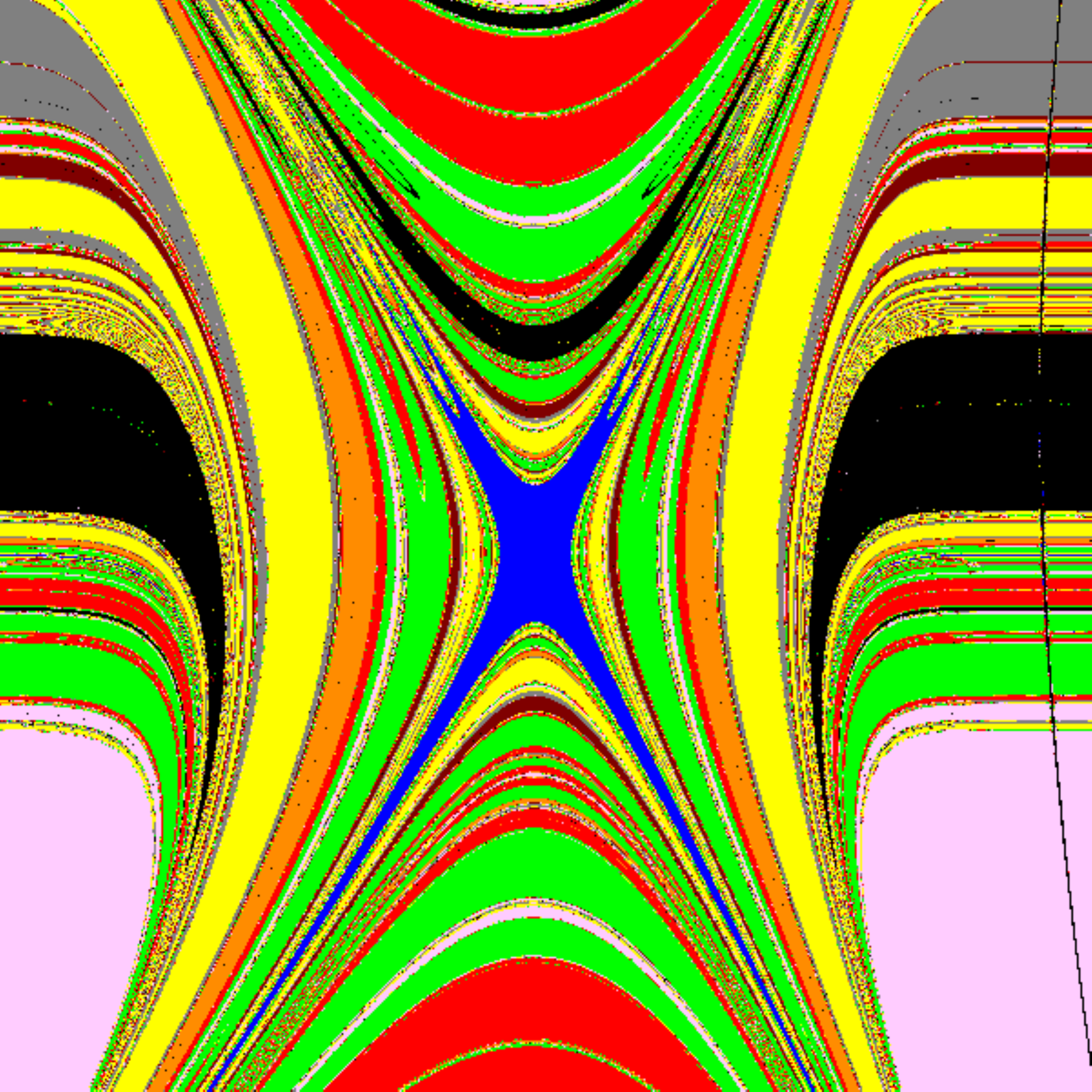}
    }
    \qquad
     \subfigure[]
    {
        \includegraphics[width=0.3\textwidth]{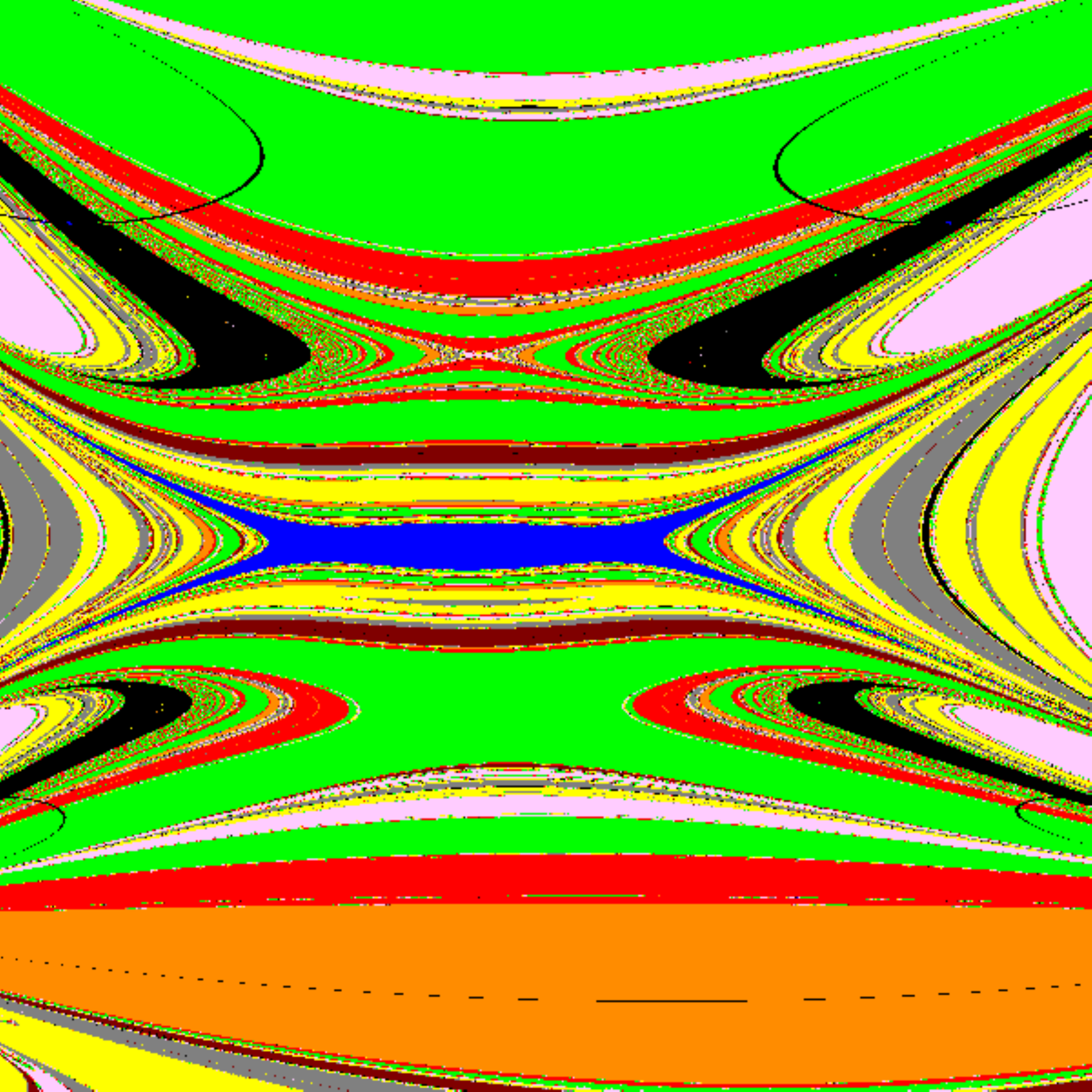}
    }
    \caption
    {
        (a) The dynamical plane of $S_{p^{\star}}$ with $p^{\star}$ given in (\ref{eq:p7}). Each basin of attraction is drawn in a different colour (yelow, orange, pink, red, brown, green, grey). In (b)-(e) images we show, in blue, the four attracting periodic orbit not corresponding to any root of $p$ as proved in Theorem B. Black regions will be explained in the next section.
Figures ((b)-(e)) correspond to zooms of (a) centered at the points $(1,2)$, $(2,(5+\sqrt{5})/2)$, $((5+\sqrt{5})/2,(3+\sqrt{5})/2)$ and $((3+\sqrt{5})/2,1)$, respectively.  
 }
    \label{fig:period4_DynamicalPlane}
\end{figure}

\section{The secant map on a torus: Proof of Theorem C}
\label{sec:torus}

The aim of this section is to extend the secant map $S$ to points in $\delta_S\setminus \mathcal{Q}$ by means of extending the map at infinity. 

\begin{lemma}\label{lem:extends}
The following statements hold.
\begin{itemize}
\item [(a)] ${\displaystyle \lim_{y \to  \pm \infty} S(x_0,y)=(\pm \infty,x_0)}$, 
\item [(b)] ${\displaystyle \lim_{x \to \pm \infty} S(x,y_0)=(y_0,y_0)}$.
\end{itemize} 
\end{lemma}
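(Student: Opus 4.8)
The plan is to read off both limits directly from the closed form $S(x,y) = (y,\, y - p(y)/q(x,y))$ established in Lemma~\ref{lem:singularity}, together with the divergence of $q$ at infinity. The one ingredient I would record first is that, by Lemma~\ref{lem:pol_q}, $q(x,y)=\sum_{j=1}^k a_j q_j(x,y)$ is, separately in each variable, a polynomial of degree exactly $k-1$ with leading coefficient $a_k=1$ (the top power $y^{k-1}$, resp. $x^{k-1}$, is contributed only by the term $a_k q_k$). Since $k>1$, this degree $k-1\ge 1$ is positive, so $|q(x_0,y)|\to\infty$ as $y\to\pm\infty$ for each fixed $x_0$, and symmetrically $|q(x,y_0)|\to\infty$ as $x\to\pm\infty$ for each fixed $y_0$. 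This is the only place where the hypothesis $k\ge 2$ enters, and it is what forces the relevant fractions to vanish.

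I would dispatch part (b) first, since it is immediate. Fix $y_0$. The first component of $S(x,y_0)$ equals $y_0$ for every $x$, hence is constant. The second component is $y_0 - p(y_0)/q(x,y_0)$; as $p(y_0)$ is a fixed number while $|q(x,y_0)|\to\infty$, the quotient tends to $0$ and the second component tends to $y_0$. This gives $\lim_{x\to\pm\infty}S(x,y_0)=(y_0,y_0)$.

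For part (a) I would exploit the factorization of Lemma~\ref{lem:pol_q} to cancel the dominant growth. Fix $x_0$. Setting $x=x_0$ in $p(x)-p(y)=(x-y)q(x,y)$ yields $p(y)=p(x_0)+(y-x_0)q(x_0,y)$, whence the second component of $S(x_0,y)$ simplifies:
\begin{equation*}
y - \frac{p(y)}{q(x_0,y)} = y-(y-x_0)-\frac{p(x_0)}{q(x_0,y)} = x_0-\frac{p(x_0)}{q(x_0,y)}.
\end{equation*}
As $y\to\pm\infty$ the remaining fraction tends to $0$ by the divergence of $q$ recorded above, so the second component tends to $x_0$, while the first component is simply $y\to\pm\infty$. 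This produces $\lim_{y\to\pm\infty}S(x_0,y)=(\pm\infty,x_0)$.

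There is essentially no genuine obstacle here; the only care needed is the bookkeeping of the degree and leading coefficient of $q$ in each variable separately, which guarantees $|q|\to\infty$ rather than merely being bounded away from $0$. The substitution $p(y)=p(x_0)+(y-x_0)q(x_0,y)$ in part (a) is what converts an indeterminate $\infty-\infty$ form into a clean limit, and it is the step I would highlight as the crux of the argument.
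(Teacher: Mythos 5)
Your proof is correct and takes essentially the same route as the paper: both reduce the second coordinate to an explicit rational expression and let the denominator dominate, the hypothesis $\deg p = k\geq 2$ being the decisive fact. The only cosmetic difference is that you pass through the $q$-form $x_0 - p(x_0)/q(x_0,y)$ via Lemma~\ref{lem:pol_q}, while the paper combines the original definition into the single fraction $\bigl(p(y)x_0 - y\,p(x_0)\bigr)/\bigl(p(y)-p(x_0)\bigr)$; these expressions are algebraically identical, and your version has the minor virtue of settling the case $x_0=0$ without any separate degree comparison.
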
 

\begin{proof}
We prove (a). From (\ref{eq:secant_real}) we have  
$$
{\displaystyle \lim_{y \to  \pm \infty}} S(x_0,y)={\displaystyle \lim_{y \to  \pm \infty}}\left(y,y- p(y) \frac{y-x_0}{p(y)-p(x_0)}\right)={\displaystyle \lim_{y \to  \pm \infty}}\left(y,\frac{p(y)x_0-yp(x_0)}{p(y)-p(x_0)}\right)=(\pm \infty,x_0),
$$
where the last equality uses that the degree of $p$ is at least 2. Statement (b) follows similarly.
\end{proof}

This lemma shows that $S$ can be extended to points of the form $(x,\pm\infty)$ and $(\pm \infty, y)$. To formalize this extension we would need to identify the symbols $+\infty$ and $-\infty$ so that the final domain of the extended map will be a torus minus one point.

If we set  $R=\{ (x,y) \in \mathbb R^2 \, ; \, 0 < x < 1 \, , \, 0 < y < 1 \}$, then a natural topological model for the torus $\mathbb T^2$ is given by $\mathbb T^2:=\overline{R}/\sim$ with the identifications $(x,0) \sim (x,1)$ and $(0,y) \sim (1,y)$. See Figure \ref{fig:torus}(a). On the other hand we might also consider $\mathbb T_0^2:=R/\sim$ which turns to be a torus minus one point $(0,0)\sim(1,0)\sim (0,1)\sim (1,1)$. See Figure \ref{fig:torus}(b).
\begin{figure}[ht]
    \centering
     \includegraphics[width=0.45\textwidth]{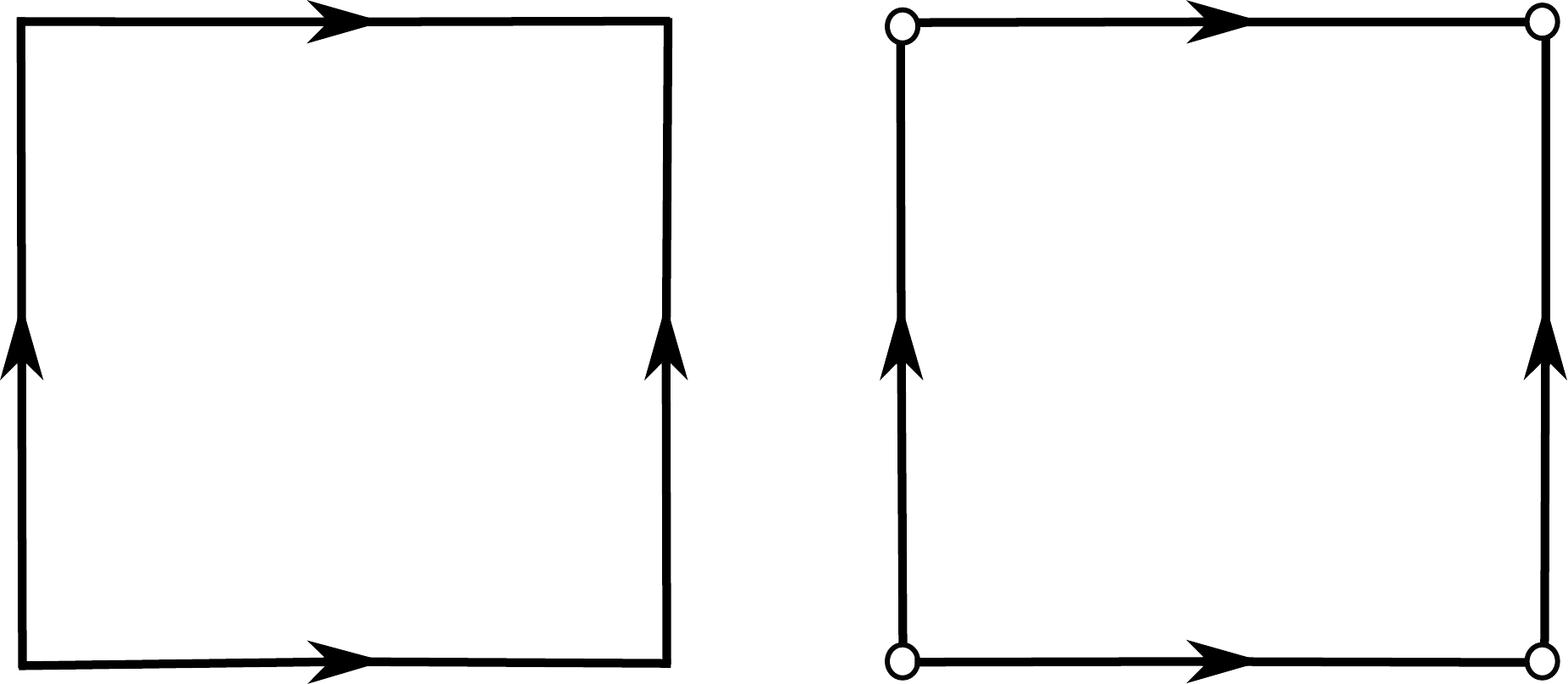}
    \put(-165,-15) {\small (a)} 
       \put(-50,-15) {\small (b)} 
           \caption{\small{The topological model of the Torus $\mathbb T^2$ and the tours minus one point (erasing the four corners of the unit square).}}
    \label{fig:torus}
    \end{figure}

This topological model together with Lemma \ref{lem:extends} adapts precisely to our goal to extend $S$ to the set   
\[
R_{\infty} = \{ (x,y) \in \mathbb R^2 \} \cup \{(x,\pm \infty) \, , \, x \in \mathbb R \} \cup \{(\pm \infty,y) \,, \, y \in \mathbb R \}
\]
\noindent with the identifications $(x,+\infty) \sim (x, - \infty)$ and $(+\infty,y) \sim (-\infty,y)$. Similarly we define $\mathbb T_{\infty}^2:=R_{\infty} / \sim$ which correspond precisely to the torus minus one point 
$$
(-\infty,-\infty)\sim(-\infty,\infty)\sim (\infty,-\infty)\sim (\infty,\infty).
$$
Equivalently,
\[
\mathbb T^2_{\infty}= \{ (x,y) \in \mathbb R^2 \} \cup \{(x, \infty) \, , \, x \in \mathbb R \} \cup \{( \infty,y) \,, \, y \in \mathbb R \}.
\]

The following three charts 
\begin{equation}\label{eq:atlas}
\begin{split}
&\varphi_1(x,y):={\rm Id}(x,y) \ {\rm if} \ (x,y)\in \mathbb R^2, \quad \varphi_2(x,y) = \left\{ \begin{array}{ll}
(x,0) & \textrm{if $y=\infty$} \\
\left(x,\frac{1}{y}\right) & \textrm{if $y\ne \{0,\infty\}$}
\end{array} \right. \ {\rm and} \\
&\varphi_3(x,y) = 
\left\{ \begin{array}{ll}
(0,y) & \textrm{if $x= \infty$}\\
\left(\frac{1}{x},y\right) & \textrm{if $x\ne \{0,\infty\}$} \ ,
\end{array} \right.
\end{split}
\end{equation}
define a smooth atlas for the surface which allow us to do the needed computations. 
Given a point $P \in \mathbb T^2_{\infty}$ we denote by ${\cal U}_P$ a small enough open neighbourhood of $P$ in $ \mathbb T^2_{\infty}$. Set 
\begin{equation}
\label{eq:def_G}
G (x,y) = \left( y, \frac{y q(x,y)-p(y)}{q(x,y)}\right), \quad (x,y)\in \mathbb R^2.
\end{equation}
Then we might use the map $G$ and the atlas $\{\varphi_j,\ j=1,2,3\}$ to extend the secant map to  $\mathbb T^2_{\infty}$. We denote the resultant map by $\hat{S}$  and its expression is given by
\begin{equation}
\label{eq:def_Shat}
\hat{S} (x,y) := \left \{ 
\begin{array}{ll}
  \left(\varphi_1^{-1} \circ G\circ \varphi_1\right)\left(x,y\right)&  \quad \hbox{ if }   \,  (x,y)\in \cal U_P \hbox{ where } P \in {\mathbb R^2\setminus \delta_S} \\
  \left(\varphi_2^{-1}\circ G\circ \varphi_1\right) \left(x,y\right)&  \quad \hbox{ if } \, (x,y)\in  \cal U_P \hbox{ where } P \in  {\delta_S \setminus \cal Q}  \\
  \left(\varphi_3^{-1}\circ G\circ \varphi_2\right) \left(x,y\right)&  \quad \hbox{ if } \, (x,y)\in  \cal U_P \hbox{ where } P =  {(x_0,\infty)} \\
   \left( \varphi_1^{-1}\circ G\circ \varphi_3\right) \left(x,y\right)&  \quad \hbox{ if } \, (x,y)\in  \cal U_P \hbox{ where } P = {(\infty,y_0)}
\end{array}
\right. 
\end{equation}

\subsection{Proof of Theorem C}
For all points in $\mathbb R^2\setminus \delta_S$ the map $\hat{S}$ (in $\mathbb R^2$-coordinates) is $G$ (see (\ref{eq:def_G})). Hence Theorem A concludes that $\hat{S}$ is smooth on this domain. To check the differentiability  of $\hat{S}$ at the points $(x,y)\ \in  \delta_S \setminus \cal Q$ we split the arguments in three cases. All computations below  follow from  (\ref{eq:atlas}), (\ref{eq:def_G}) and (\ref{eq:def_Shat}). 

\vglue 0.2truecm
\noindent {\it Case (i).} Let $(x_0,y_0) \in  \delta_S \setminus \cal Q\subset \mathbb T_{\infty}^2$ and let $\cal U_P$ be a neighbourhood of $(x_0,y_0)$. Then the map $\hat{S}$ in $\mathbb R^2$-coordinates near a point $\varphi_1\left(x_0,y_0\right)=\left(x_0,y_0\right)$ is given by
\begin{equation}\label{deriv_coord_1}
\hat{S}(x,y)=\left(y, \frac{q(x,y)}{y q(x,y)-p(y)}\right).
\end{equation}
Since $\left(x_0,y_0\right)\in \delta_S \setminus \cal Q$, it is immediate to see that on the one hand $\hat{S}(x_0,y_0)=(y_0,0)$ and on the other hand the map is locally a rational map with non zero denominator. So  $\hat{S}$ is smooth.

\vglue 0.2truecm
\noindent {\it Case (ii).} Let $(x_0,\infty) \in   \mathbb T^2_{\infty}$. Doing some computations the expression of $\hat{S}$ in $\mathbb R^2$-coordinates near a point $\varphi_2\left(x_0,\infty\right)=\left(x_0,0\right)$ is given by
\begin{equation}\label{deriv_coord_2}
\hat{S}(x,y)=\left(y,\frac{r(y)x-p(x)y^{k-1}}{r(y)-p(x)y^k}\right),
\end{equation}
where $r(y)=1+a_{k-1}y+\cdots + a_1 y^{k-1}+a_0 y^k$. We notice that on the one hand $\hat{S}(x_0,0)=(0,x_0)$  and on the other hand the map is locally a rational map with non zero denominator. So  $\hat{S}$ is smooth.

\noindent {\it Case (iii).} Let $ (\infty,y_0) \in \mathbb T^2_{\infty}$.  Doing some computations the expression of $\hat{S}$ in $\mathbb R^2$-coordinates near a point $\varphi_3\left(\infty,y_0\right)=\left(0,y_0\right)$ is given by
\begin{equation}\label{deriv_coord_3}
\hat{S}(x,y)=\left(y,\frac{p(y)x^{k-1}-yr(x)}{p(y)x^k-r(x)}\right) .
\end{equation}
We notice that on the one hand $\hat{S}(0,y_0)=(y_0,y_0)$  and on the other hand the map is locally a rational map with non zero denominator. So  $\hat{S}$ is smooth.

The existence of a periodic orbit of minimal period three on $\mathbb T_{\infty}^2$ (compare with Theorem B)  is a direct application of the  definition of $\hat{S}$. Indeed,  if $x_0$ is such that $p'\left(x_0\right)=0$ then $\left(x_0,x_0\right)\in\delta_S\setminus {\cal Q}$ and  its $\hat{S}$-orbit is given by,
$$
\hat{S}(x_0,x_0)=(x_0,\infty) , \quad  \hat{S}(x_0,\infty)=(\infty,x_0) \quad \hbox{ and }\quad \hat{S}(\infty,x_0)=(x_0,x_0).
$$

Fix $k \geq 3$. We need to  see that the eigenvalues of $D\hat{S}^3\left(x_0,x_0\right)$ are 0 and 1. To do that we use the corresponding $\mathbb R^2$-coordinates, equations (\ref{deriv_coord_1}), (\ref{deriv_coord_2}) and (\ref{deriv_coord_3}), obtaining 

$$
\begin{array}{ll}
D\hat{S}^3(x_0,x_0)=\hat{S}\left(\hat{S}^2(x_0,x_0)\right)D\hat{S}\left(\hat{S}(x_0,x_0)\right)D\hat{S}\left(x_0,x_0\right)&=\\
 \left(\begin{array}{cc}0 &1 \\0 & 1\end{array}\right)\left(\begin{array}{cc}0 &1 \\1 & 0\end{array}\right)\left(\begin{array}{cc}0 &1 \\-\frac{p''(x_0)}{p(x_0)} &-\frac{p''(x_0)}{p(x_0)}\end{array}\right)
=\left(\begin{array}{cc}0 &  1 \\ 0 & 1 \end{array}\right).
\end{array}
$$

\begin{remark}
As a consequence of Theorem C every point  $(x_0,x_0)$ with $p'(x_0)=0$ generates a periodic orbit $(x_0,x_0) \mapsto (x_0, \infty) \mapsto (\infty, x_0) \mapsto (x_0,x_0)$ which is not a priori attracting, since one of the eigenvalues is 1. However in Figure  \ref{fig:3cycle} we show in one particular example the existence of an open region, having $(x_0,x_0)$ on its boundary, of initial seeds converging to the three periodic orbit (similar regions can also be observed on the dynamical plane of previous examples; see Figures \ref{fig:dyn_plane} and \ref{fig:period4}). In  \cite{BedFri}  (Theorem 3.2) the authors give a explicit domain of seeds converging to the periodic three cycle.

\end{remark} 
 \begin{figure}[ht]
    \centering
     \includegraphics[width=0.4\textwidth]{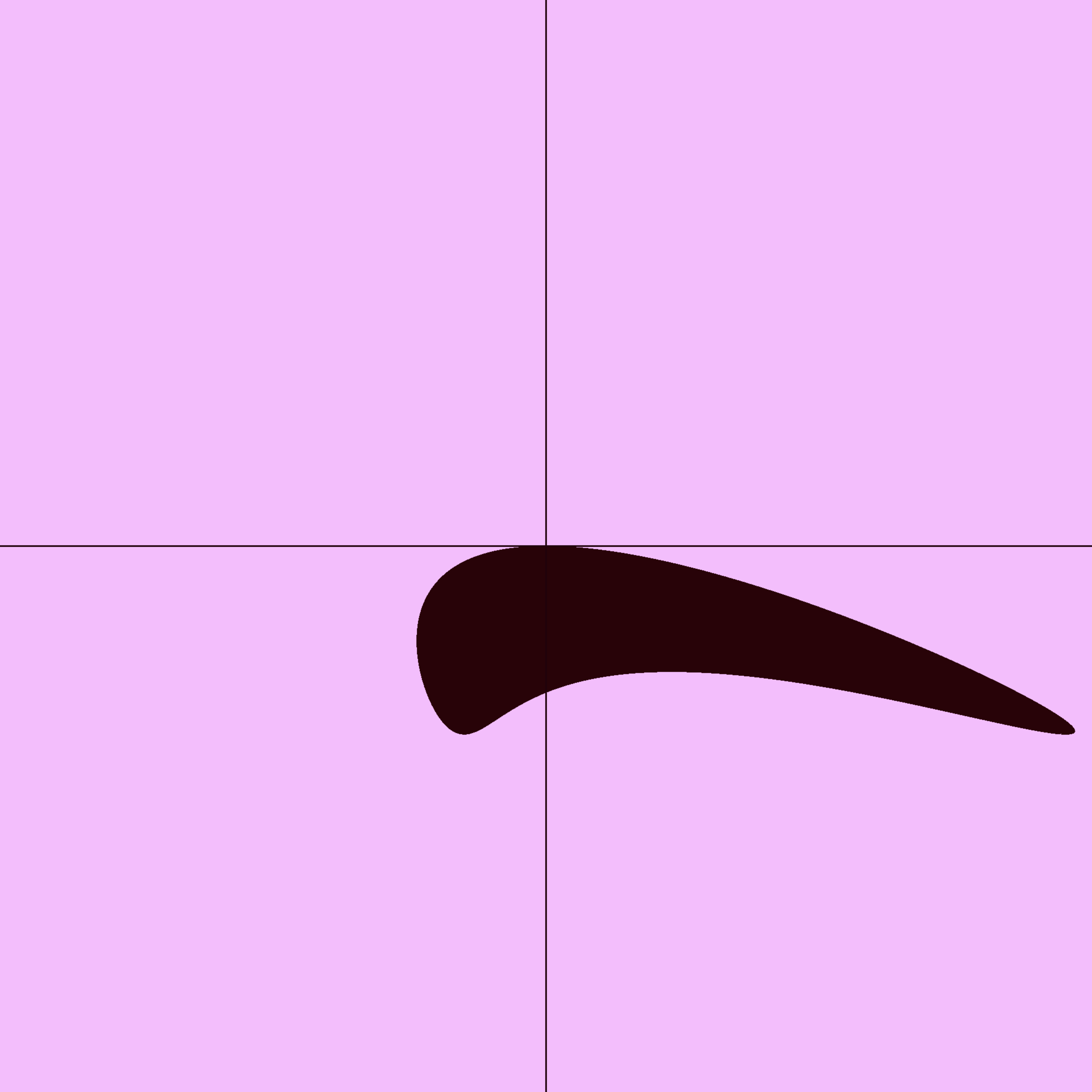}
           \caption{\small{Dynamical plane of the secant map applied to the polynomial $p(x)=\frac{1}{3}x^3- 4x+3$ near the critical point (2,2). Range of the picture $[1.92,2.08] \times [1.92,2.08]$. Points in colour black denote points converging to $(2,2)$ under $\hat{S}^{3}$, while points in colour  pink  denote points attracted by the root $x=3$ of $p$. We also show  the lines $x=2$ and $y=2$, thus the point $(2,2)$ is located at the center of the picture.}}
    \label{fig:3cycle}
    \end{figure}

    \bibliographystyle{alpha}
\bibliography{biblio}
\end{document}